\pgfplotsset{compat=1.14}
\newcommand{\cal}{\mathcal}
\newcommand{\R}{\mathbb R}
\newcommand{\C}{\mathbb C}
\newcommand{\N}{\mathbb N}
\newcommand{\Z}{\mathbb Z}
\newcommand{\eps}{\varepsilon}
\newcommand{\cro}{\mathrm{cr}}
\theoremstyle{plain}
\newtheorem{theorem}{Theorem}[section]
\newtheorem{corollary}[theorem]{Corollary}
\newtheorem{lemma}[theorem]{Lemma}
\theoremstyle{definition}
\newtheorem{remark}[theorem]{Remark}
\theoremstyle{definition}
\newtheorem{definition}[theorem]{Definition}
\title[$C^\infty$-convergence of conformal mappings on triangular 
lattices]{$C^\infty$-convergence of conformal mappings for conformally 
equivalent triangular lattices}
\author{Ulrike B\"ucking}
\date{\today}
\begin{document}

\begin{abstract}
Two triangle meshes are conformally equivalent if for any pair of incident 
triangles the absolute values of the corresponding cross-ratios of the four 
vertices agree. 
Such a pair can be considered as preimage and image of a discrete conformal 
map. In this article we study discrete conformal maps which are defined 
on parts of a triangular lattice $T$ with strictly acute angles. That is, $T$ 
is an infinite triangulation of the plane with congruent strictly acute 
triangles. A smooth conformal map $f$ can be approximated on a compact subset by 
such discrete conformal maps $f^\eps$, defined on a part of $\eps T$, 
see~\cite{Bue16}. We improve this 
result and show that the convergence is in fact in~$C^\infty$. Furthermore, we 
describe how the cross-ratios of the four vertices for pairs of incident 
triangles are related to the Schwarzian derivative of~$f$.
\end{abstract}

\maketitle

\section{Introduction}
Holomorphic functions build the basis and heart of the rich theory of complex 
analysis. The subclass of {\em conformal maps} consists of holomorphic 
functions 
with nowhere vanishing derivatives. These may be characterized as infinitesimal 
scale-rotations. M\"obius transformations are special conformal maps on the 
Riemann sphere $\hat{\C}$, which preserve all cross-ratios. Recall that the 
cross-ratio of four distinct points $a,b,c,d\in\C$ is defined as
\begin{equation*}
 \cro(a,b,c,d)=\frac{(a-b)(c-d)}{(b-c)(d-a)}.
\end{equation*}
A conformal map $f$ infinitesimally preserves cross-ratios. Additionally, the 
first deviation from being a M\"obius transformation can be expressed by the 
Schwarzian derivative of $f$, which is defined as
\begin{equation}\label{eqdefSchwarzian}
{\mathfrak S}[f](z)= \left(\frac{f''(z)}{f'(z)}\right)' 
-\frac{1}{2}\left(\frac{f''(z)}{f'(z)}\right)^2
= \frac{f'''(z)}{f'(z)}-\frac{3}{2}\left(\frac{f''(z)}{f'(z)}\right)^2.
\end{equation}
In particular, there holds
 \begin{multline*}
 \lim_{\eps\to 0}\frac{1}{\eps^2}\left( 
\frac{\cro(f(a),f(a+\eps(b-a)),f(a+\eps(c-a)),f(a+\eps(d-a)))}{ \cro(a,b,c,d)} 
-1\right) \\
=\frac{(a-c)(b-d)}{6}{\mathfrak S}[f](a).
\end{multline*}

\subsection{$C^\infty$-convergence for discrete conformal maps on triangular 
lattices}
In the discrete theory, the idea of characterizing conformal maps as local
scale-rotations may be translated into different concepts. Here we consider the 
discretization coming from a metric viewpoint: Infinitesimally, lengths are 
scaled by a factor, i.e.\ by $|f'(z)|$ for a conformal function $f$ on 
$D\subset\C$. The smooth complex domain is replaced in this discrete setting by 
a triangulation of a connected subset of the plane $\C$. The infinitesimal 
preservation of the cross-ratios is then substituted by the preservation of all 
length cross-ratios ($=$ absolute values of the cross-ratio) for all pairs of 
incident triangles. (Note that only M\"obius transformations would preserve all 
cross-ratios of pairs of incident triangles of the triangulation. So this 
condition would be two restrictive.)

In this article we consider the case where the triangulation is a (part of a) 
{\em triangular lattice}. In particular, let $T$ be a lattice triangulation of 
the whole complex plane $\C$ with congruent triangles, see 
Figure~\ref{FigTiling}. 
\begin{figure}[t]
\subfigure[Example of a triangular lattice.]{\label{FigTiling}
\includegraphics[width=0.5\textwidth]{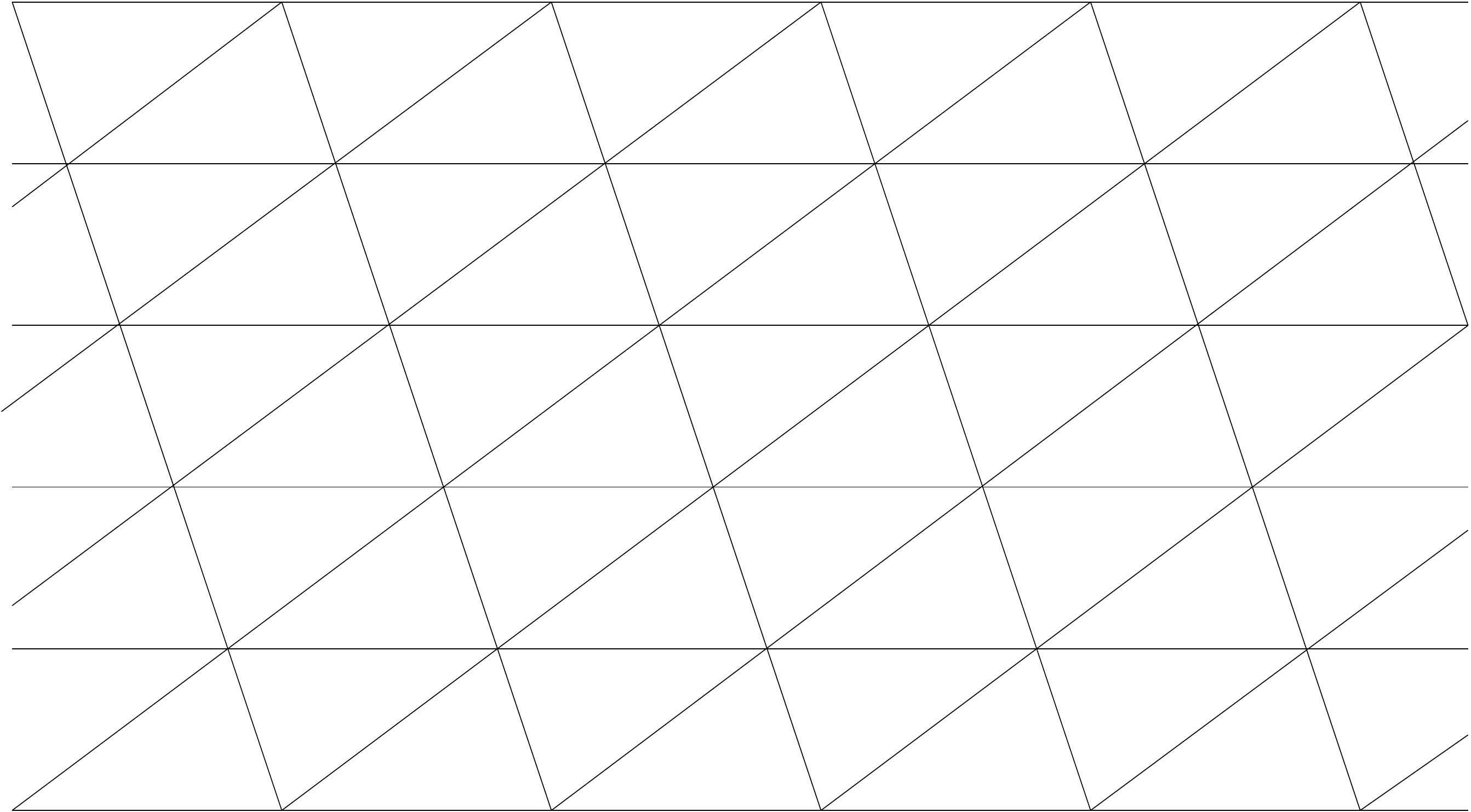}
}
\hspace{3em}
\subfigure[Suitably scaled acute angled triangle.]{\label{figTriang}
 \input{TriangleABCS.pspdftex}
}
\caption{Lattice triangulation of the plane with congruent 
triangles.}\label{FigRegTile}
\end{figure}
The sets of vertices and edges of $T$ are denoted by $V$ and $E$ respectively.
Edges will often be written as $e=[v_i,v_j]\in E$, where $v_i,v_j\in V$ are 
its incident vertices. For triangular faces we use the notation 
$\Delta[v_i,v_j,v_k]$ enumerating the incident vertices with respect to the
orientation (counterclockwise) of $\C$. We only consider the case of 
acute angles, i.e., $\alpha,\beta,\gamma\in(0,\pi/2)$ and 
assume for simplicity that the origin is a vertex.

On a subcomplex of $T$ we now define a discrete conformal mapping by the 
preservation of the length cross-ratios.

\begin{definition}[see~\cite{BPS13}]\label{defdcm}
 A {\em discrete conformal map} $g$ is the restriction to the vertices $V_S$ of 
a continuous and orientation preserving map $g_{\text{PL}}$ of a subcomplex 
$T_S$ of a triangular lattice $T$ to $\C$. We demand that $g_{\text{PL}}$ is 
locally a homeomorphism in a 
neighborhood of each interior point and that its restriction to every triangle
is a linear map onto the corresponding image triangle, that is the mapping is 
piecewise linear. 
Furthermore, the absolute value 
of the cross-ratio (called {\em length cross-ratio}) is preserved for all 
pairs of adjacent triangles:
\begin{equation}
 |\cro(v_1,v_2,v_3,v_4)|= |\cro(g(v_1),g(v_2),g(v_3),g(v_4))|,
\end{equation}
where $\Delta[v_1,v_2,v_3]$ and $\Delta[v_1,v_3,v_4]$ are two adjacent 
triangles of the lattice with common edge $[v_1,v_3]$ and $|a|$ denotes the 
modulus of $a\in\C$.
\end{definition}

Note that the values of the cross-ratio $\cro(g(v_1),g(v_2),g(v_3),g(v_4))$ on 
all interior edges $[v_1,v_3]$ determine the map $g$ up to a global M\"obius 
transformation, see also Remark~\ref{remqtof} below for more details.

\begin{remark}[\cite{BPS13}]
 For a continuous, orientation preserving and piecewise linear map 
$g_{\text{PL}}$ on a simply 
connected subcomplex the preservation of the length cross-ratios is equivalent 
to the existence of a function $u:V_S\to\R$ on the vertices, 
called {\em associated scale factors}, such that for all edges $e=[v,w]\in E_S$ 
there holds
\begin{equation}\label{eqdefdiscf}
 |g(v)-g(w)|=|v-w|\text{e}^{(u(v)+u(w))/2}.
\end{equation}
Thus the lengths of the edges 
of the triangulation are changed according to scale factors at the vertices. 
The new triangles are then ``glued together'' to result in a piecewise linear 
map, see Figure~\ref{FigExMap} for an illustration.
\end{remark}

\begin{figure}[ht]
\centering{
\includegraphics[width=0.3\textwidth, trim={0 2cm 0 2.7cm},clip]{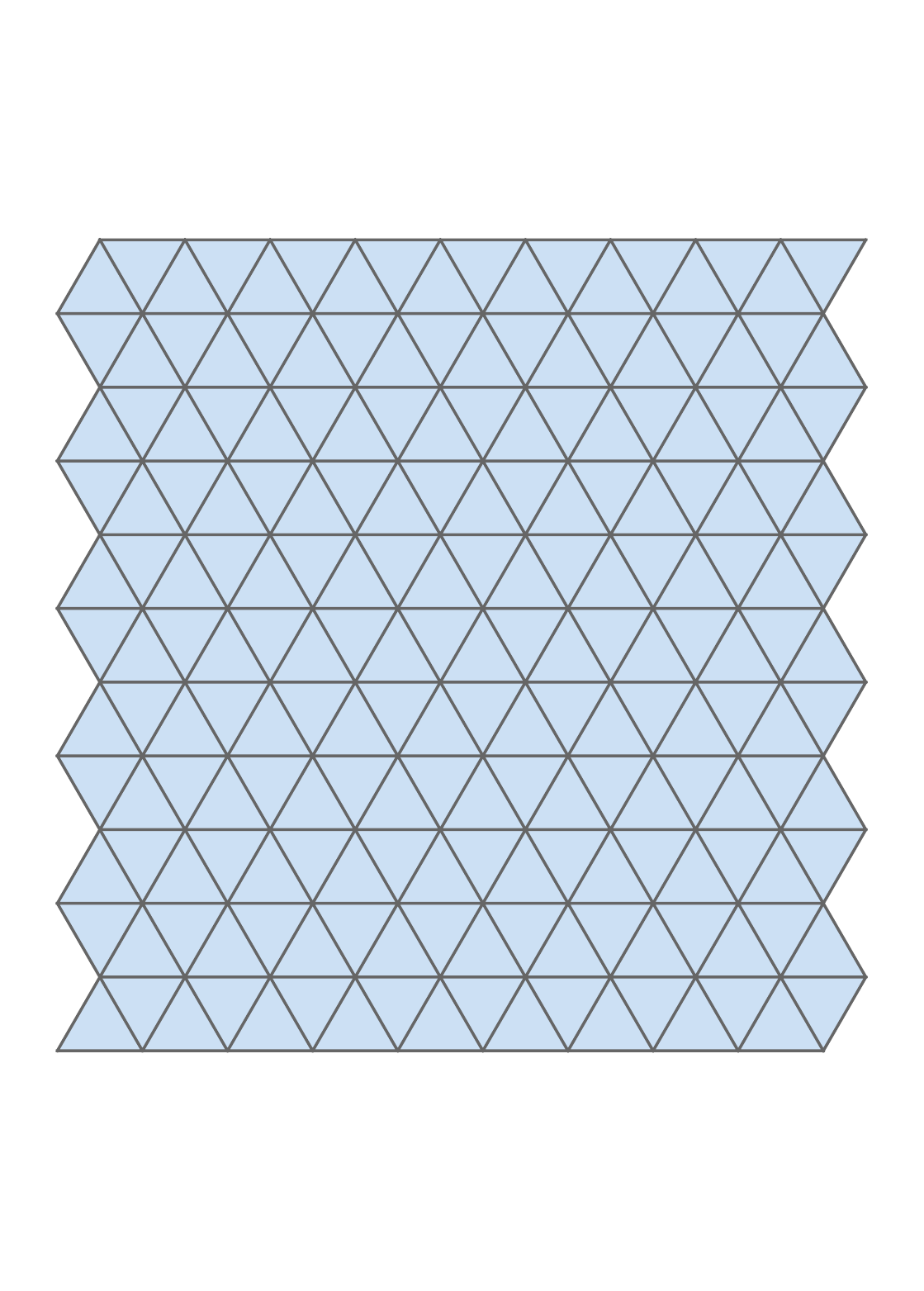} 
\begin{minipage}[b][0.18\textwidth][t]{0.2\textwidth}
 \[\overset{g}{\longrightarrow} \]
\end{minipage}
\includegraphics[width=0.3\textwidth, trim={0 1.7cm 0 1.7cm},clip]{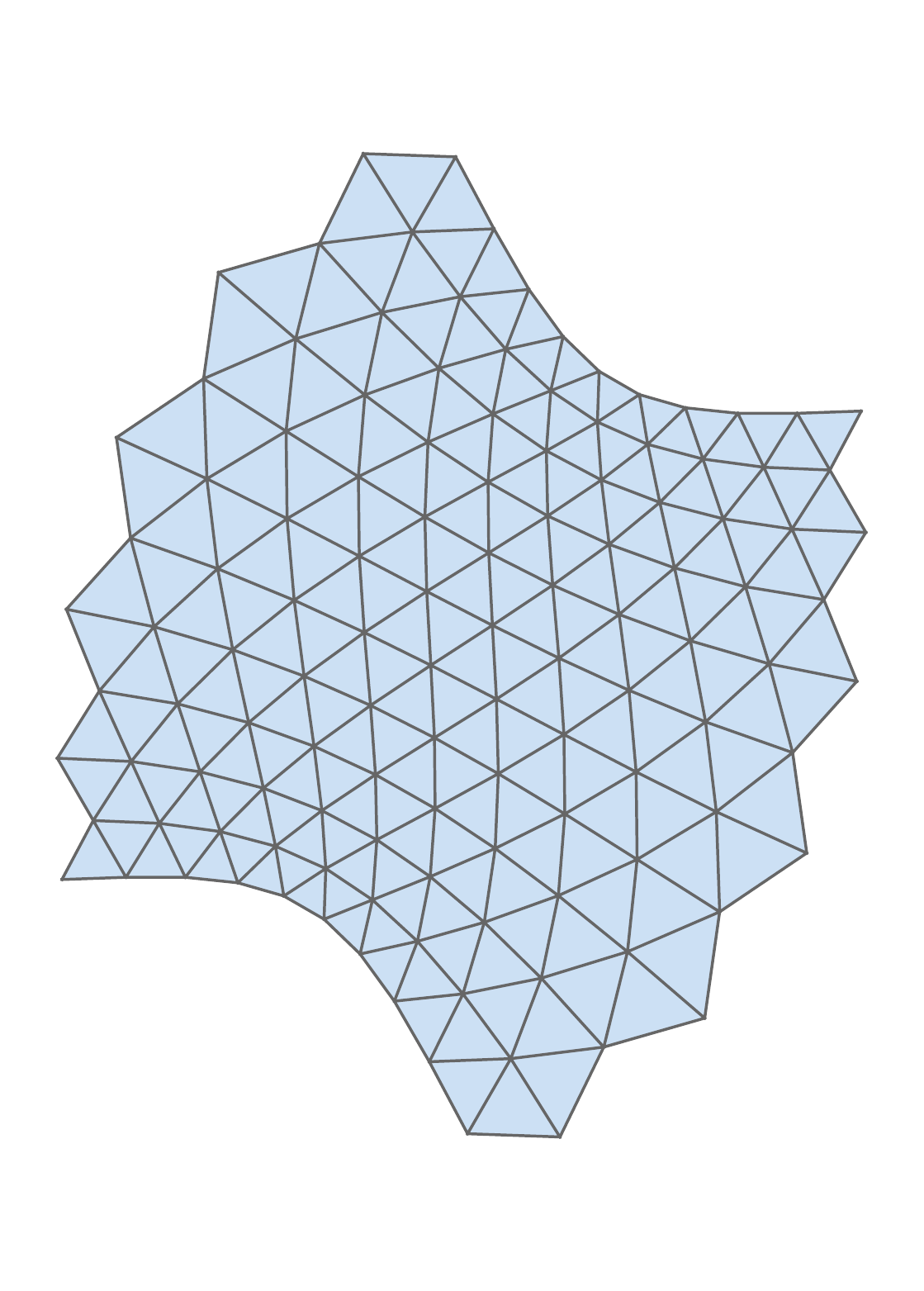}
}
\caption{Example of a discrete conformal map $g$.}\label{FigExMap}
\end{figure}

In fact, our definition of a discrete conformal map relies on the notion of 
discretely conformally equivalent triangle meshes. These have been studied by  
Luo, Gu, Sun, Wu, Guo~\cite{Luo04,Luo13,Luo14}, Bobenko, Pinkall, 
and Springborn~\cite{BPS13} and others.


In~\cite{Bue16} we showed that given a smooth conformal map $f$ there exists a 
sequence of discrete conformal maps $f^\eps$ which approximates the given map 
on a compact set.
In particular, the discrete conformal maps can be obtained from a Dirichlet 
problem: Given some function $u_\partial$ on the boundary of a subcomplex $T_S$,
find a discrete conformal map whose associated
scale factors agree on the boundary with $u_\partial$. Of course, we choose the 
boundary values  $u_\partial$ according to the given function $f$ as 
$u_\partial=\log|f'|$.
In this article we improve this result and show that the approximation in 
fact is $C^\infty$.
Furthermore, as a by-product, we establish the approximation of the 
Schwarzian derivative of $f$ using cross-ratios of pairs of incident triangles.

To be precise, denote by $\eps T$ the lattice $T$ scaled by $\eps>0$.

\begin{theorem}[{\cite[Theorem~1.1]{Bue16}}]\label{theoConvalt}
Let $f:D\to\C$ be a conformal map (i.e.\ holomorphic with $f'\not=0$). Let 
$K\subset D$ be a compact set which is the closure of its simply connected 
interior $\Omega_K:=int(K)$ and assume that $0\in \Omega_K$. 
Let $T$ be a triangular lattice with strictly acute angles.
For each $\eps>0$ let $T^\eps_K$ 
be a subcomplex of $\eps T$ whose support is contained in $K$ and is 
homeomorphic to a closed disc. We further assume that $0$ is an 
interior vertex of $T^\eps_K$. Let $e_0=[0,{\mathbb v_{\mathbb 0}}]\in 
E^\eps_K$ be one of its incident edges.

Then if $\eps>0$ is small enough (depending on 
$K$, $f$, and $T$) there exists a unique discrete conformal map $f^\eps$ on 
$T^\eps_K$ which satisfies the following two conditions:
\begin{itemize}
 \item The associated scale factors $u^\eps:V^\eps_K\to\R$ satisfy
\begin{equation}\label{eqboundu}
 u^\eps(v)=\log|f'(v)|\qquad \text{for all boundary vertices } v \text{ of } 
V^\eps_K.
\end{equation}
\item The discrete conformal map is normalized according to 
\[\qquad f^\eps(0)=f(0)\quad \text{and}\quad \arg(f^\eps({\mathbb v_{\mathbb 
0}})-f^\eps(0))= \arg({\mathbb v_{\mathbb 
0}})+ \arg(f'(\frac{{\mathbb v_{\mathbb 0}}}{2})) 
\pmod{2\pi}.\]
\end{itemize}
Furthermore, the following estimates for $u^\eps$ and $f^\eps$ hold for all 
vertices $v\in V^\eps_K$ and points $x$ in the support of $T^\eps_K$ 
respectively with constants 
$C_1,C_2$ depending only on $K$, $f$, and $T$, but not on $v$ or $x$:
\begin{enumerate}[(i)]
 \item The scale factors $u^\eps$ approximate $\log |f'|$ uniformly with error 
of order~$\eps^2$:
\begin{equation}\label{eqconvu}
 \left|u^\eps(v)-\log|f'(v)|\right|\leq C_1\eps^2.
\end{equation}
\item The discrete conformal maps $f^\eps$ 
converge to $f$ for $\eps \to 0$ uniformly with error of order~$\eps$:
\begin{equation*}
 \left|f^\eps_{\text{PL}}(x)-f(x)\right|\leq C_2\eps,
\end{equation*}
where $f^\eps_{\text{PL}}$ is the piecewise linear extension of $f^\eps$ from 
Definition~\ref{defdcm}.
\end{enumerate}
\end{theorem}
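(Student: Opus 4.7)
The plan is to reduce the problem to a nonlinear discrete equation for the scale factors $u^\eps$ on the interior vertices, solve it variationally, and then bound the error by comparison with the smooth function $\log|f'|$.

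First I would reformulate the discrete conformal condition using~\eqref{eqdefdiscf}: given boundary values of $u$, the edge lengths of the prospective image triangulation are determined up to scale factors at the interior vertices, which also fixes all angles of the (still acute, for $\eps$ small) image triangles. The gluing condition around each interior vertex $v$ -- that its new angles sum to $2\pi$ -- then becomes a nonlinear system $\Phi_v(u)=0$. Following Bobenko--Pinkall--Springborn, this system is the Euler--Lagrange equation of a strictly convex functional on an open set of admissible scale factors, whose Hessian coincides, up to a sign, with a cotangent-Laplacian of the image mesh with \emph{positive} cotangent weights; positivity uses exactly the strictly acute angles of the model triangle in Figure~\ref{figTriang}. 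This yields a unique solution $u^\eps$ once $\eps$ is small enough to keep all image triangles acute, and the normalisation in the second bullet fixes the unique discrete conformal map $f^\eps$ realising these scale factors.

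Next I would establish~\eqref{eqconvu}. Setting $u_{\mathrm{sm}}:=\log|f'|$, which is harmonic on $D$ because $\log f'$ is holomorphic, I would Taylor-expand $\Phi_v(u_{\mathrm{sm}})$ at each interior vertex. The leading contribution is the standard finite-difference Laplacian on the lattice $\eps T$, which vanishes by harmonicity; a careful fourth-order expansion, exploiting the symmetries of the congruent lattice triangles, shows that the residual is $O(\eps^4)$ per vertex, i.e.\ $O(\eps^2)$ after dividing by the usual $\eps^{-2}$ normalisation of the discrete Laplacian. Writing $w^\eps:=u^\eps-u_{\mathrm{sm}}$, which vanishes on $\partial V^\eps_K$ by~\eqref{eqboundu}, and subtracting the equations for $u^\eps$ and $u_{\mathrm{sm}}$, one obtains a uniformly elliptic discrete cotangent-Laplace equation for $w^\eps$ with right-hand side of size $O(\eps^2)$ in $\ell^\infty$. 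A discrete maximum principle combined with the $\ell^\infty\!\to\!\ell^\infty$ boundedness of the inverse cotangent-Laplacian on $V^\eps_K$ with zero Dirichlet data, uniform in $\eps$ via lattice Green's function estimates, then delivers $\|w^\eps\|_\infty\le C_1\eps^2$.

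For the uniform convergence of $f^\eps$ to $f$ I would combine this scale-factor estimate with~\eqref{eqdefdiscf}: every edge $[v,w]$ is mapped to a segment whose length differs from $|f'(\tfrac{v+w}{2})|\cdot|v-w|$ by a relative error of $O(\eps^2)$, and the angles of each image triangle, being smooth functions of three scale factors and the fixed model angles $\alpha,\beta,\gamma$, differ from the infinitesimal angles of $f$ by $O(\eps^2)$. Summing the edge-increments along a lattice path from $0$ and using the normalisation at $e_0$ to fix the global similarity then gives $|f^\eps(v)-f(v)|=O(\eps)$ at each vertex, and the piecewise linear extension adds only a triangle-diameter error. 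The main obstacle is the consistency step: one must produce an honest $O(\eps^2)$ rather than merely $O(1)$ after the harmonic leading term has cancelled, which requires a delicate expansion of $\Phi_v$ tailored to the strictly acute model triangle and, correspondingly, a sharp uniform-in-$\eps$ bound on the discrete Green's function of the cotangent-Laplacian on the subcomplex $T^\eps_K$.
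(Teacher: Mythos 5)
This statement is not proved in the present paper at all: it is quoted verbatim from the author's earlier work \cite{Bue16} and used as a black box, so the only meaningful comparison is with the proof in that reference. Your outline is essentially that proof: existence and uniqueness via the convex variational principle of Bobenko--Pinkall--Springborn (with positivity of the cotangent weights coming from the strictly acute model triangle), consistency of $u_{\mathrm{sm}}=\log|f'|$ for the nonlinear angle-sum equation obtained by Taylor expansion (harmonicity kills the leading term and the central symmetry of the flower kills the odd-order terms, giving an $O(\eps^{4})$ angle defect, i.e.\ $O(\eps^{2})$ after the $\eps^{-2}$ normalisation), a discrete maximum-principle estimate for $w^\eps=u^\eps-\log|f'|$, and finally propagation of edge increments from the normalised edge $e_0$ to get the $O(\eps)$ bound on $f^\eps-f$.

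One step of your sketch deserves more care. To ``subtract the equations for $u^\eps$ and $u_{\mathrm{sm}}$'' and obtain a uniformly elliptic cotangent-Laplace equation for $w^\eps$, you must integrate the differential of the angle-sum map along the segment from $u_{\mathrm{sm}}$ to $u^\eps$; the resulting coefficients are cotangents of angles of \emph{intermediate} triangles, and their positivity (hence the applicability of the maximum principle) is only guaranteed once you already know that $u^\eps$ is uniformly close to $u_{\mathrm{sm}}$ --- which is what you are trying to prove. This circularity is resolved in \cite{Bue16} not by linearising around the unknown solution but by a barrier (sub-/supersolution) argument for the nonlinear equation, comparing $u^\eps$ with $\log|f'|\pm C\eps^{2}\phi$ for a suitable comparison function $\phi$ and using the monotonicity of the angle-sum map on the set of scale factors producing acute triangles; alternatively one can run a continuity argument in $\eps$. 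Also, in the last step the $O(\eps)$ rate comes from the accumulation of \emph{rotational} errors of the image triangles (each triangle's shape is correct to $O(\eps^{2})$, and these errors compound over $O(\eps^{-1})$ triangles), a point your path-summation argument should make explicit; the length estimate alone does not control the direction of the increments.
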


In this article the subcomplexes $T^\eps_K$ will be chosen such that they 
approximate the compact set $K$. In particular, we will take for $T^\eps_K$ 
a subcomplex which is simply connected, contained in $K$ and contains $0$ and 
is ``as large as possible''. This means in particular, that adding any other 
triangle of 
$\eps T$ which is contained in $K$ and shares an edge with a triangle of 
$T^\eps_K$ will result in a subcomplex which ceases to be simply connected.

\begin{theorem}\label{theoConv}
Under the assumptions of Theorem~\ref{theoConvalt} and with the above 
definition of $T^\eps_K$, the discrete conformal maps 
converges in $C^\infty(\Omega)$ to $f$.
%
%
\end{theorem}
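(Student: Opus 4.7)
My plan is to bootstrap the $O(\eps^2)$ uniform bound on $u^\eps-\log|f'|$ from Theorem~\ref{theoConvalt}(i) into an $O(\eps^2)$ bound on every discrete difference quotient of $u^\eps-\log|f'|$ on compact subsets $\Omega\Subset\Omega_K$, and then to transfer this $C^\infty$ control of the scale factors to the map $f^\eps$ itself via the reconstruction formula~\eqref{eqdefdiscf}. This reduces the theorem to three tasks: (a) writing down the discrete nonlinear equation satisfied by $u^\eps$ and its linearization, (b) establishing uniform-in-$\eps$ discrete interior regularity for the linearization, and (c) passing from scale factors to maps.

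For~(a), at each interior vertex $v$ the condition that $f^\eps$ be a genuine piecewise linear map is the angle closure $\sum_{T\ni v}\theta^{v}_T(u^\eps)=2\pi$, where $\theta^v_T$ is the image angle at $v$ in the triangle $T$, computed by the cosine rule from the scaled side lengths $\eps|e|e^{(u(v_i)+u(v_j))/2}$. Setting $w^\eps=u^\eps-\log|f'|$ and linearizing around $\log|f'|$ I obtain
\[L^\eps w^\eps(v)=r^\eps(v),\]
where $L^\eps$ is a discrete second-order difference operator with cotangent-type coefficients that depend smoothly on $\log|f'|$, and is uniformly elliptic thanks to the strict acuteness of~$T$. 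The right-hand side is
\[r^\eps(v)=-\Big(\sum_{T\ni v}\theta^v_T(\log|f'|)-2\pi\Big)+N^\eps(w^\eps)(v),\]
with $N^\eps$ collecting the quadratic and higher remainder in~$w^\eps$. Since $\log|f'|$ is harmonic (being the real part of $\log f'$) and $L^\eps$ agrees with a positive multiple of the Laplacian at leading order, Taylor expanding the angle sum on the seven-point stencil around $v$ gives $|r^\eps(v)|\le C\eps^2$ on any compact subset of $\Omega_K$; the nonlinear term $N^\eps(w^\eps)$ is $O(\eps^4)$ by Theorem~\ref{theoConvalt}(i).

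For~(b), I would run a discrete interior regularity bootstrap on $L^\eps w^\eps=r^\eps$. Given $\Omega\Subset\Omega'\Subset\Omega_K$ and a multi-index $\alpha$, the goal is a Schauder-type estimate
\[\sup_{\Omega}|D^\alpha_\eps w^\eps|\le C_\alpha\bigl(\sup_{\Omega'}|w^\eps|+\sup_{\Omega'}|r^\eps|+\eps^2\bigr),\]
where $D^\alpha_\eps$ is a divided-difference operator of order $|\alpha|$ on $\eps T$. The commutator $[D^\alpha_\eps,L^\eps]$ only differentiates the smooth coefficients of $L^\eps$ and therefore produces strictly lower-order discrete derivatives of $w^\eps$; combined with a discrete cutoff moving from $\Omega'$ to $\Omega$ and a uniform-in-$\eps$ discrete Caccioppoli (or Green's-function) estimate for $L^\eps$, this propagates the bound by induction on $|\alpha|$. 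Together with $\|w^\eps\|_\infty=O(\eps^2)$ this yields $\|D^\alpha_\eps w^\eps\|_{L^\infty(\Omega)}=O(\eps^2)$ for every~$\alpha$. For~(c), I observe that $f^\eps$ is reconstructed triangle-by-triangle from $u^\eps$ through~\eqref{eqdefdiscf} and the cosine rule, so every discrete derivative of $f^\eps_{\text{PL}}$ at an interior vertex is a smooth function of finitely many nearby values of~$u^\eps$; composing this with the $C^\infty$ convergence of $u^\eps$ gives the claimed $C^\infty(\Omega)$ convergence of $f^\eps_{\text{PL}}$ to~$f$.

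The hardest step is the discrete elliptic regularity used in~(b): standard continuum Schauder theory does not apply to $L^\eps$ as a black box, so one must implement the cutoff, commutator, and induction arguments directly on $\eps T$ and uniformly in the lattice spacing, exploiting the fact that the coefficients of $L^\eps$ are smooth in $\log|f'|$ and in the fixed angles of~$T$. A smaller but nontrivial point, flagged in the abstract, is identifying the consistency defect at exactly the right order: a naive expansion of the angle sum only yields $O(1)$, and the improvement to $O(\eps^2)$ relies on the cancellations provided by harmonicity of $\log|f'|$ together with the symmetries of the seven-point stencil of $T$; pushing the same expansion one order further simultaneously produces the Schwarzian-derivative interpretation of the cross-ratios announced in the introduction.
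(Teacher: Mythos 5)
Your strategy --- working with the scale factors $u^\eps$ and the angle-closure equation rather than with M\"obius-invariant quantities --- is genuinely different from the paper's. The paper introduces discrete Schwarzians $s_k$ (Section~\ref{secSchwarzian}), shows that the \emph{constant-coefficient} lattice Laplacian $\Delta^\eps s_k$ is a polynomial in $\eps$ and the $s_j$'s (Lemma~\ref{lemDeltas}), bootstraps $C^\infty$-bounds via the He--Schramm Regularity Lemma~\ref{lemReg}, and then reconstructs $f^\eps$ through contact M\"obius transformations $Z_k^\eps$ satisfying a discrete transport equation. Your route is acknowledged as viable in Section~\ref{SecGen} (it is essentially the method of~\cite{Bue08,LD07}), but two steps in your write-up have genuine gaps.

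First, in step (b) the estimate $\sup_{\Omega}|D^\alpha_\eps w^\eps|\le C(\sup_{\Omega'}|w^\eps|+\sup_{\Omega'}|r^\eps|+\eps^2)$ is not a correct elliptic estimate: already in the continuum one cannot bound $\|D^2w\|_\infty$ by $\|w\|_\infty+\|\Delta w\|_\infty$, and for $|\alpha|\ge 2$ the right-hand side must contain discrete derivatives (or H\"older seminorms) of $r^\eps$. Concretely, the Regularity Lemma~\ref{lemReg} trades one derivative of $\eta$ for the sup of the $\eps^{-2}$-normalized Laplacian of $\eta$; iterating it on $D^{\alpha}_\eps w^\eps$ requires $\|D^{\alpha}_\eps(\eps^{-2}r^\eps)\|=O(\eps^2)$ for all $\alpha$, i.e.\ you must show that the consistency defect has the form $r^\eps=\eps^4\rho^\eps$ with $\rho^\eps$ uniformly bounded in $C^\infty$. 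This is strictly more than ``the leading term cancels by harmonicity of $\log|f'|$'': the quadratic-in-$\nabla\log|f'|$ contributions to the angle sum also enter at the critical order and their cancellation (or their identification as a smooth, $C^\infty$-bounded source, followed by subtraction of an $\eps^2$-corrector as in~\cite{Bue08,LD07}) must be proved, not asserted. If the defect is only $O(\eps^2)$ unnormalized, as you state it, your own bootstrap does not close: one power of $\eps$ is lost at each differentiation and convergence of the higher derivatives fails. The paper avoids this entire issue because $\Delta^\eps s_k$ is \emph{identically} a polynomial in the bounded quantities $s_j$, so no consistency expansion is needed and the constant-coefficient Lemma~\ref{lemReg} applies verbatim.

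Second, step (c) is wrong as stated: a discrete derivative of $f^\eps_{\mathrm{PL}}$ is \emph{not} a function of finitely many nearby values of $u^\eps$. Equation~\eqref{eqdefdiscf} determines only the moduli of the image edge vectors locally; their arguments are obtained by developing the rescaled triangles along a path from the normalization point, so $\arg\partial^\eps_k f^\eps(v)$ accumulates angle defects over the whole path from $0$ to $v$. To repair this you must introduce the discrete rotation field (the harmonic conjugate of $u^\eps$), whose \emph{differences} are local in $u^\eps$, control it from the normalization at the origin, and only then differentiate. This is exactly the role played in the paper by the transformations $Z_k^\eps$, whose transition matrices are local in the $s_k$ and whose value at $0$ is pinned down by a separate three-point argument.
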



The proof of Theorem~\ref{theoConv} is inspired by the methods of the proof of 
$C^\infty$-convergence for hexagonal circle packings in~\cite{HeSch98}. In 
particular, the main objects are discrete Schwarzians defined in 
Section~\ref{secSchwarzian} as suitably scaled measure of deformation of the 
M\"obius invariant cross-ratios from their original values in the lattice $T$. 
As the discrete Laplacian of such a discrete Schwarzian is a polynomial in 
$\eps$ and the discrete Schwarzians, we can deduce their
$C^\infty$-convergence in Section~\ref{SecBound} analogously as 
in~\cite{HeSch98} from a Regularity lemma~\ref{lemReg} using some facts on 
discrete differential operators introduced in Section~\ref{SecPrelim}. The 
necessary boundedness of the discrete Schwarzian itself can be deduced from 
Theorem~\ref{theoConvalt}. Finally, the $C^\infty$-convergence of $f^\eps$ is 
shown in Section~\ref{SecConv}. Here, we also derive the precise connection 
between the limits of the discrete Schwarzians and the Schwarzian derivative of 
the given function $f$. In Section~\ref{SecGen} we discuss some generalizations 
of our proof, for example to the convergence of circle patterns with hexagonal 
combinatorics and other notions of discrete conformality.

\subsection{Other convergence results for discrete conformal maps}

Smooth conformal maps can be characterized in various ways. This leads to 
different notions of discrete conformality. Convergence issues have already
been studied for some of these discrete analogs. We only give a very short 
overview and cite some results of a growing literature.

Linear definitions can be derived 
as discrete versions of the Cauchy-Riemann equations and have a long and 
still developing history. Connections of such discrete mappings to smooth 
conformal functions have been studied for example 
in~\cite{CFL28,LF55,Mer07,ChS12,S13,BS15,W14}. In particular, this includes 
$C^\infty$-convergence for the regular $\eps\Z^2$-lattice.

The idea of characterizing conformal maps as local scale-rotations has lead to
the consideration of circle packings, more precisely to investigations on 
circle packings with the same (given) combinatorics of the 
tangency graph. Thurston~\cite{Thu85} first conjectured the convergence of 
circle packings to the Riemann map, which was then proven 
by~\cite{RS87,HeSch96,Ste97}. $C^\infty$-convergence for hexagonal circle 
packings was shown in~\cite{HeSch98}.

The theory of circle patterns generalizes the case of circle packings. Also, 
there is a link to integrable structures via isoradial circle 
patterns. The approximation of conformal maps using circle pattens has been 
studied in~\cite{Sch97} for orthogonal circle patterns with square grid 
combinatorics and furthermore in~\cite{DM05,diss,Bue08,LD07,BBS17}, which also 
contain results on $C^\infty$-convergence.

\section{Preliminaries on discrete differential operators and 
notation}\label{SecPrelim}

In the following, we introduce useful definitions and notation by generalizing
the notions defined in~\cite[Sec.~2]{HeSch98}.

We consider the regular triangular lattice $\eps T$ with edge length 
$\varepsilon>0$. In particular, let
\[V^\eps=\{n\eps\sin\alpha +m\text{e}^{i\beta}\eps\sin\gamma : n,m\in\Z\}\] 
be the set of vertices. We abbreviate the edge directions by 
\[\omega_1=1=-\omega_4,\qquad \omega_2=\text{e}^{i\beta}=-\omega_5,\qquad  
\omega_3=\text{e}^{i(\alpha+\beta)}=-\omega_6,\] 
and the corresponding edge lengths in $T$ as in Figure~\ref{figTriang} by 
\[L_1=\sin\alpha=L_4,\qquad L_2=\sin\gamma=L_5,\qquad L_3=\sin\beta=L_6.\]
Note in particular, that $L_1\omega_1-L_2\omega_2+L_3\omega_3=0$.

For $k=1,\dots,6$ denote by $\tau^\eps_k:V^\eps\to V^\eps$, 
$\tau^\eps_kv=v+\eps L_k\omega_k$ the translation along one of the lattice 
directions. For any subset $W\subseteq V^\eps$ a vertex $v\in W$ is called 
{\em interior vertex} of $W$ if all neighboring vertices $\tau^\eps_kv$ for 
$k=1,\dots, 6$ are contained in $W$. Set $W^{(0)}=W$ and for each $l\geq 1$ 
denote by $W^{(l)}$ the set of interior vertices of $W^{(l-1)}$.

Given a function $\eta:W\to\C$, denote by $\tau_k^\eps\eta$ the function which 
differs from $\eta$ by a translation $\tau_k^\eps$:
\[\tau_k^\eps\eta(v)=\eta(\tau_k^\eps v)= \eta(v+\eps L_k\omega_k).\] 
Define the {\em (discrete) directional derivative} $\partial_k^\eps\eta: 
W^{(1)}\to\R$ by
\[\partial_k^\eps\eta(v)=\frac{1}{\eps 
L_k}(\eta(v+\eps L_k\omega_k)-\eta(v)),\]
so $\partial_k^\eps=(\eps L_k)^{-1}(\tau^\eps_k-I)$, where $I\eta=\eta$.
For further use, note the following rule for the discrete differentiation of a 
product:
\begin{equation}\label{prodrule}
 \partial_k^\eps(\eta_1\eta_2) =(\partial_k^\eps \eta_1) \eta_2 + 
\tau_k^\eps \eta_1 (\partial_k^\eps \eta_2).
\end{equation}
Furthermore, define the {\em (discrete) Laplacian} $\Delta^\eps\eta: 
W^{(1)}\to\C$ by
\begin{equation}
 \Delta^\eps\eta(v)=\frac{1}{\eps^2}\sum\limits_{k=1}^6 
i\frac{\omega_k^2+1}{\omega_k^2-1} (\eta(v+\eps L_k\omega_k)-\eta(v)).
\end{equation}
Note that this is a scaled version of the well-known $\cot$-Laplacian as 
$\cot\varphi=i (\text{e}^{2i\varphi}+1)/(\text{e}^{2i\varphi}-1)$. Of 
course, the operators $I$, $\tau_k^\eps$, $\partial^\eps_j$ and $\Delta^\eps$ 
commute with each other.
We will also use $\|\eta\|_W=\sup_{v\in W}|\eta(v)|$ to denote the 
$L^\infty(W)$-norm of $\eta$.

Let $\Omega\subset\C$ be some domain and let $f:\Omega\to\C$ be some 
function. 
 For each $\eps>0$, let $f^\eps: 
W^\eps\to\C$ be some function defined on a set of vertices $W^\eps\subset 
V^\eps$. Assume that for every $z\in \Omega$ there are some 
$\delta_1,\delta_2>0$ 
such that for all $\eps\in(0,\delta_1)$ we have $\{v\in V^\eps : 
|v-z|<\delta_2\}\subset W^\eps$. 

Then we say that {\em $f^\eps$ converges to $f$ locally uniformly in $\Omega$}, 
if for every $\sigma>0$ and every $z\in\Omega$ there are $\delta_1,\delta_2>0$ 
such that $|f(z)-f^\eps(v)|<\sigma$ for every 
$\eps\in(0,\delta_1)$ and every $v\in W^\eps$ with $|v-z|<\delta_2$.

If $f$ is differentiable, denote by $\partial_k f$ the directional derivative, 
that is 
\[\partial_k f(z)=\lim\limits_{t\to 0} 
\frac{f(z+t\omega_k)-f(z)}{t}\qquad \text{ for } k=1,\dots,6.\]
Let $n\in\N$ and suppose that $f$ is $C^n$-smooth. We call {\em $f^\eps$ 
convergent to $f$ in $C^n(\Omega)$} if for every sequence 
$k_1,\dots,k_j\in\{1,\dots,6\}$ with $j\leq n$ the functions 
$\partial^\eps_{k_j}\partial^\eps_{k_{j-1}}\dots \partial^\eps_{k_1}f^\eps$ 
converges to $\partial_{k_j}\partial_{k_{j-1}}\dots \partial_{k_1}f$ 
locally uniformly in $\Omega$. If this holds for all $n\in\N$, the convergence 
is $C^\infty(\Omega)$.

The functions $f^\eps$ are called {\em uniformly bounded in $C^n(\Omega)$}, if 
for every compact set ${\cal K}\subset \Omega$ there is some constant 
$C({\cal K},n)$ such that $\|\partial^\eps_{k_j}\partial^\eps_{k_{j-1}}\dots 
\partial^\eps_{k_1}f^\eps\|_{W^\eps} < C(K,n)$ holds for every $j\leq n$ 
and all $\eps$ small enough. The functions $f^\eps$ are {\em uniformly 
bounded in $C^\infty(\Omega)$} if they are uniformly bounded in $C^n(\Omega)$ 
for all $n\in\N$.

The proofs of the following lemmas are simple adaptions 
of the corresponding arguments in~\cite[Sect.~2]{HeSch98}.

\begin{lemma}[see {\cite[Lemma~2.1]{HeSch98}}]\label{lemconv}
 Let $n\in\N$. Suppose that the functions $f^\eps$ are uniformly bounded in 
$C^{n+1}(\Omega)$. Then for every sequence $\eps\to 0$ there is a 
$C^n(\Omega)$-function $f$ and a subsequence of $\eps\to 0$ such that 
$f^\eps\to f$ in $C^n(\Omega)$ along this subsequence.
\end{lemma}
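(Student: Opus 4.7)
The plan is to adapt the standard Arzelà–Ascoli compactness argument to the discrete setting, in direct parallel with \cite[Lemma~2.1]{HeSch98}. Observe first that uniform boundedness in $C^{n+1}(\Omega)$ with $n\geq 0$ already gives, in particular, a uniform bound on the discrete first derivatives $\partial^\eps_k f^\eps$ on every compact ${\cal K}\subset\Omega$. By the discrete fundamental theorem of calculus, i.e.\ telescoping along a lattice path,
\[
f^\eps(w)-f^\eps(v)=\sum_{m=0}^{N-1} \eps L_{k_m}\omega_{k_m}\cdot\partial^\eps_{k_m}\!f^\eps(v_m),
\]
any two lattice points $v,w\in W^\eps\cap {\cal K}$ can be joined (for $\eps$ small) by a lattice path in $W^\eps$ of combinatorial length $O(|v-w|/\eps)$, so the $f^\eps$ satisfy a uniform Lipschitz bound $|f^\eps(v)-f^\eps(w)|\leq C({\cal K})|v-w|$. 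Extending each $f^\eps$ piecewise linearly to the support of $T^\eps_K$ and applying classical Arzelà–Ascoli (together with a compact exhaustion of $\Omega$ and a diagonal extraction) yields a subsequence along which $f^\eps$ converges locally uniformly to some continuous $f$.

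Next I would apply exactly the same argument to each of the finitely many discrete derivatives $\partial^\eps_{k_j}\cdots\partial^\eps_{k_1}f^\eps$ of order $j\leq n$: each such family is, by hypothesis, uniformly bounded in $C^1(\Omega)$, hence equi-Lipschitz on compacta by the telescoping estimate above. A further finite diagonal extraction then produces one subsequence along which every mixed discrete derivative of order at most $n$ converges locally uniformly to a continuous limit function $g_{k_1,\ldots,k_j}$.

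The remaining step is to identify $g_{k_1,\ldots,k_j}=\partial_{k_j}\cdots\partial_{k_1}f$. For this I would rewrite the telescoping identity above as a Riemann sum: for fixed $z\in\Omega$ and small $t$,
\[
f^\eps(v_\eps+\lfloor t/(\eps L_k)\rfloor\eps L_k\omega_k)-f^\eps(v_\eps)
=\sum_{m}\eps L_k\,\partial^\eps_k f^\eps(v_\eps+m\eps L_k\omega_k),
\]
where $v_\eps\in W^\eps$ converges to $z$. The left-hand side converges to $f(z+t\omega_k)-f(z)$ by locally uniform convergence of $f^\eps$, while the right-hand side converges to $\int_0^t g_k(z+s\omega_k)\,ds$ since $\partial^\eps_k f^\eps\to g_k$ locally uniformly. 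Dividing by $t$ and letting $t\to 0$ shows $\partial_k f=g_k$, so $f\in C^1(\Omega)$ with the claimed first derivatives. Iterating this argument on the families $\partial^\eps_{k_1}\!f^\eps$, $\partial^\eps_{k_2}\partial^\eps_{k_1}\!f^\eps$, and so on up to order $n$, yields $f\in C^n(\Omega)$ together with $C^n(\Omega)$-convergence along the subsequence.

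The only genuinely non-routine point is the identification step in the final paragraph, where one must pass from a purely discrete telescoping identity to a continuous integral; all other arguments reduce to standard Arzelà–Ascoli and diagonalization. Since the six lattice directions $\omega_k$ used to define $\partial^\eps_k$ are literally the translation directions of $\eps T$, the Riemann-sum passage carries through exactly as in \cite[Sect.~2]{HeSch98}, with the only modifications being the use of six direction vectors $\omega_k$ with (generally distinct) edge lengths $L_k$ in place of the hexagonal symmetry used there.
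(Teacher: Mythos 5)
Your argument is correct and is essentially the proof the paper intends: the paper itself omits the proof and defers to \cite[Lemma~2.1]{HeSch98}, whose argument is exactly this combination of a telescoping/equi-Lipschitz estimate from the next-order derivative bound, Arzel\`a--Ascoli with diagonal extraction, and the Riemann-sum identification of the limits of the discrete derivatives with the derivatives of the limit. No substantive gaps.
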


\begin{lemma}[see {\cite[Lemma~2.2]{HeSch98}}]
 Suppose that $f^\eps, g^\eps, h^\eps$ converges in $C^\infty(\Omega)$ to 
functions $f,g,h:\Omega\to\C$, defined on a domain $\Omega\supset K$, and 
suppose that 
$h\not=0$ in $\Omega$. Then the following convergences are in 
$C^\infty(\Omega)$:
\begin{enumerate}[(1)]
 \item $f^\eps+ g^\eps\to f+g$,
\item $f^\eps g^\eps\to fg$,
\item $1/h^\eps\to 1/h$,
\item if $h^\eps>0$ then $\sqrt{h^\eps}\to \sqrt{h}$,
\item $|h^\eps|\to |h|$.
\end{enumerate}
\end{lemma}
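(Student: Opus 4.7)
The plan is to induct on the order $n$ of the discrete derivative, reducing every operation to repeated use of the discrete product rule~\eqref{prodrule}, together with one preliminary observation: translations preserve $C^\infty$-convergence. Concretely, because $\tau_k^\eps$ commutes with every $\partial_j^\eps$, if $\eta^\eps\to\eta$ in $C^\infty(\Omega)$ then applying $\tau_k^\eps$ to any mixed discrete derivative only shifts its argument by $\eps L_k\omega_k\to 0$, so $\tau_k^\eps\eta^\eps\to\eta$ in $C^\infty(\Omega)$ as well. This is the workhorse of the whole argument.

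With this in place I would handle the five items in order. Part~(1) is immediate from linearity of $\partial_k^\eps$. For part~(2), I would iterate \eqref{prodrule}: a straightforward induction shows that $\partial^\eps_{k_n}\cdots\partial^\eps_{k_1}(f^\eps g^\eps)$ is a finite sum (indexed by subsets $S\subseteq\{1,\dots,n\}$) of terms of the form $(T_S\,\partial^\eps_{k_{i_1}}\!\cdots\partial^\eps_{k_{i_r}}f^\eps)\cdot(T_{S^c}\,\partial^\eps_{k_{j_1}}\!\cdots\partial^\eps_{k_{j_s}}g^\eps)$, where $T_S$ is a composition of some $\tau^\eps_k$'s. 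By the preliminary observation and the hypotheses $f^\eps\to f$, $g^\eps\to g$ in $C^\infty$, each factor converges locally uniformly to the corresponding partial derivative of $f$ or $g$, and the sum collapses to the smooth Leibniz expansion of $\partial_{k_n}\cdots\partial_{k_1}(fg)$.

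Parts~(3) and~(4) follow by applying \eqref{prodrule} to the identities $h^\eps\cdot(1/h^\eps)=1$ and $\sqrt{h^\eps}\cdot\sqrt{h^\eps}=h^\eps$, which yield
\[
\partial_k^\eps\!\left(\frac{1}{h^\eps}\right)=-\frac{\partial_k^\eps h^\eps}{h^\eps\,\tau_k^\eps h^\eps},\qquad
\partial_k^\eps\sqrt{h^\eps}=\frac{\partial_k^\eps h^\eps}{\sqrt{h^\eps}+\sqrt{\tau_k^\eps h^\eps}}.
\]
Since $h\ne 0$ on $\Omega$ and $h^\eps\to h$ locally uniformly, the denominators are bounded away from $0$ on each compact subset of $\Omega$ for $\eps$ small, and (in case~(4)) the positivity hypothesis makes $\sqrt{h^\eps}$ well-defined there. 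Induction on $n$, combined with parts~(1), (2) and the translate-preservation observation, then expresses every higher discrete derivative of $1/h^\eps$ and $\sqrt{h^\eps}$ as a $C^\infty$-convergent rational combination of derivatives of $h^\eps$. For part~(5) I would write $|h^\eps|^2=h^\eps\,\overline{h^\eps}$; conjugation commutes with $\partial_k^\eps$ and hence preserves $C^\infty$-convergence, so (2) gives $|h^\eps|^2\to|h|^2$ in $C^\infty$, and then (4) applied to $|h^\eps|^2$ (which is positive wherever $h\ne 0$) yields $|h^\eps|\to|h|$ in $C^\infty$.

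The only genuinely delicate point is the combinatorial bookkeeping in the iterated product rule: each differentiation introduces translations that propagate through subsequent differentiations, so one must check that in the inductive step the translates $T_S$ still commute correctly with $\partial^\eps$ and that the limiting sum really matches the classical Leibniz or quotient expansion. This is mechanical but tedious, and I expect it to be the main (only) obstacle; it mirrors exactly the corresponding step in \cite[Lemma~2.2]{HeSch98} and admits the same resolution.
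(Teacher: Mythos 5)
Your proposal is correct and follows essentially the same route as the paper, which itself defers to the argument of \cite[Lemma~2.2]{HeSch98}: iterate the discrete product rule~\eqref{prodrule}, note that the translations $\tau_k^\eps$ it introduces are harmless in the limit, and reduce (3)--(5) to algebraic identities whose denominators stay bounded away from zero on compacta because $h\not=0$. The combinatorial bookkeeping you flag is indeed the only content, and it resolves exactly as you describe.
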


\section{The discrete Schwarzians}\label{secSchwarzian}
Let $f$ be a conformal map on a domain in the complex plane $\C$, that is, $f$ 
is a holomorphic function with non-vanishing derivative $f'(z)\not=0$. The 
Schwarzian derivative of $f$ is defined in~\eqref{eqdefSchwarzian} and is
itself holomorphic. Further, for any M\"obius 
transformation $M(z)=(az+b)/(cz+d)$ we have ${\mathfrak S}[M\circ f]= 
{\mathfrak S}[f]$ and 
${\mathfrak S}[f]=0$ if and only if $f$ is the restriction of some M\"obius 
transformation. For proofs and further properties of the Schwarzian see for 
example~\cite[Chap.~II]{Le87}.

In the following, we will define M\"obius invariants of conformally equivalent 
triangular lattices and derive their equations. Suitable M\"obius invariants 
and corresponding
equations have been worked out in~\cite{Sch97} for orthogonal circle patterns 
and in~\cite{HeSch98} for hexagonal circle packings.

Inspired by~\cite{HeSch98}, we will use the M\"obius invariants as intermediate 
means in the study of the convergence problem. The discrete Schwarzians will be 
defined as suitably scaled measure of deformation of the M\"obius invariants 
from their regular values. The convergence of the discrete Schwarzians is 
also notable on its own right and increases the connection between analogous 
notions for smooth and discrete conformal maps.

For any interior edge $[u,v]$ in $T^\eps_K$ with two adjacent triangles 
$\Delta[u,v,w_1]$ and $\Delta[u,w_2,v]$ denote by
\begin{align}
Q([u,v])& =\cro(u,w_2,v,w_1),\\
q^\eps([u,v])&= \cro(f^\eps(u),f^\eps(w_2),f^\eps(v) ,f^\eps(w_1)) 
\label{eqdefq}
\end{align}
the cross-ratio of the four vertices on the quad formed by the 
the two triangles $\Delta[u,v,w_1]$ and $\Delta[u,w_2,v]$ and by their images 
under $f^\eps$ respectively. Note that $Q([v,v+\eps L_k \omega_k])= 
(\omega_{k-1}L_{k-1})^2/(\omega_{k+1}L_{k+1})^2$ where the indices are taken 
modulo $6$.


We define the {\em discrete Schwarzian} at $[u,v]$ by
\begin{equation}
 s([u,v])=\frac{1}{\eps^2}\left(\frac{q^\eps([u,v])}{Q([u,v])}-1\right).
\end{equation}
If $f^\eps$ is a M\"obius transformation, we have $s([u,v])=0$, analogously to 
the smooth case.

For any vertex $v\in V^\eps$ denote $e_k(v)=[v,\tau_k^\eps(v)]$, see 
Figure~\ref{figFlowerMoeb}~(left). Let $q_k,s_k:(W^\eps)^{(1)}\to\C$ be 
defined as $q_k(v):=q^\eps(e_k(v))$ and $s_k(v):=s(e_k(v))$. Then obviously, 
$q_{k+3}(\tau_k^\eps v)=q_k(v)$ and 
\begin{equation}\label{eqsk+}
s_{k+3}(\tau_k^\eps v)=s_k(v),
\end{equation}
where the indices are taken modulo $6$. Note that $Q_k:= Q(e_k(v)) =Q_{k+3}$, 
as $\eps T$ is a lattice.

\begin{lemma}\label{lemeqq}
 Let $v$ be an interior vertex in $V_K^\eps$. Then there holds
\begin{align}
 & q_1(v)q_2(v)q_3(v)q_4(v)q_5(v)q_6(v)=1 \label{eqq1}\\
 &1-q_k(v) +q_k(v)q_{k+1}(v) -q_k(v)q_{k+1}(v)q_{k+2}(v) \notag \\
&\qquad +q_k(v)q_{k+1}(v)q_{k+2}(v)q_{k+3}(v)- 
q_k(v)q_{k+1}(v)q_{k+2}(v)q_{k+3}(v)q_{k+4}(v)=0  \label{eqq2} \\
 & 1- {\scriptstyle \frac{1}{|Q_k|^2}} q_k(v) 
+{\scriptstyle \frac{1}{|Q_k|^2|Q_{k-1}|^2}} q_k(v)q_{k-1}(v) 
-q_k(v)q_{k-1}(v)q_{k-2}(v)  \notag \\
&\ +{\scriptstyle\frac{1}{|Q_k|^2}} q_k(v)q_{k-1}(v)q_{k-2}(v)q_{k-3}(v)
- {\scriptstyle\frac{1}{|Q_k|^2|Q_{k-1}|^2} }
q_k(v)q_{k-1}(v)q_{k-2}(v)q_{k-3}(v)q_{k-4}(v)=0 \label{eqq3}
\end{align}
for $k=1,\dots,6$, where the indices are taken modulo $6$, and 
$|Q_k|=L_{k-1}^2/L_{k+1}^2$.
\end{lemma}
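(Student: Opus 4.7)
I would exploit the M\"obius-invariance of the cross-ratio to simplify each $q_k(v)$. Writing $w:=f^\eps(v)$ and $w_k:=f^\eps(\tau_k^\eps v)$ and composing with the M\"obius transformation $\psi(z)=1/(z-w)$, set $\tilde w_k:=\psi(w_k)$ and $\delta_k:=\tilde w_k-\tilde w_{k-1}$ (indices $\bmod 6$). Then
\[ q_k(v)=\cro(\infty,\tilde w_{k-1},\tilde w_k,\tilde w_{k+1})=-\frac{\delta_{k+1}}{\delta_k}. \]
From this, \eqref{eqq1} follows immediately since $\prod_{k=1}^6 q_k=(-1)^6\prod_k(\delta_{k+1}/\delta_k)=1$ by telescoping.

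For \eqref{eqq2}, the same telescoping yields $q_k q_{k+1}\cdots q_{k+j-1}=(-1)^j\delta_{k+j}/\delta_k$ for $j\geq 1$, so the alternating sum collapses to
\[\sum_{j=0}^5 (-1)^j \prod_{i=0}^{j-1}q_{k+i} \;=\; \frac{1}{\delta_k}\sum_{j=0}^5 \delta_{k+j} \;=\; \frac{1}{\delta_k}\sum_{l=1}^6 \delta_l \;=\; 0,\]
which is the closing condition of the hexagon $\tilde w_1,\dots,\tilde w_6$, itself automatic since $\delta_l=\tilde w_l-\tilde w_{l-1}$ telescopes.

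The subtler identity \eqref{eqq3} combines the \emph{conjugated} closing condition $\sum_l\bar\delta_l=0$ with the discrete conformality $|q_l|=|Q_l|$. Setting $p_l:=1/\bar q_l=-\bar\delta_l/\bar\delta_{l+1}$ and telescoping backwards, one finds $\prod_{i=0}^{j-1}p_{k-i}=(-1)^j\bar\delta_{k-j+1}/\bar\delta_{k+1}$, hence
\[\sum_{j=0}^5 (-1)^j \prod_{i=0}^{j-1}p_{k-i} \;=\; \frac{1}{\bar\delta_{k+1}}\sum_{l=1}^6\bar\delta_l \;=\; 0.\]
Discrete conformality yields $\bar q_l=|Q_l|^2/q_l$ and hence $p_l=q_l/|Q_l|^2$; substituting converts the identity into
\[\sum_{j=0}^5 (-1)^j \frac{\prod_{i=0}^{j-1}q_{k-i}}{\prod_{i=0}^{j-1}|Q_{k-i}|^2} \;=\; 0.\]
The period-$3$ symmetry $L_j=L_{j+3}$ of the lattice gives $|Q_{j+3}|=|Q_j|$ and $|Q_k|^2|Q_{k-1}|^2|Q_{k-2}|^2=1$; applying these, the products $\prod_{i=0}^{j-1}|Q_{k-i}|^2$ for $j=0,\dots,5$ collapse to the cyclic pattern $1,\,|Q_k|^2,\,|Q_k|^2|Q_{k-1}|^2,\,1,\,|Q_k|^2,\,|Q_k|^2|Q_{k-1}|^2$, which on inversion matches exactly the coefficient sequence appearing in \eqref{eqq3}. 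The main obstacle is this final bookkeeping step --- in particular, verifying that the product resets to $1$ at $j=3$ (so that the middle term of \eqref{eqq3} carries no $|Q|$-factor) before recycling the pattern, both consequences of the period-$3$ structure of the acute triangular lattice.
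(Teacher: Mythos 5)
Your proof is correct and takes essentially the same route as the paper's: send $f^\eps(v)$ to $\infty$ by a M\"obius transformation so that each $q_k$ becomes a ratio $-\delta_{k+1}/\delta_k$ of consecutive edge vectors of the image hexagon, identify \eqref{eqq1} and \eqref{eqq2} as its (telescoping) closing conditions, and derive \eqref{eqq3} from the conjugated closing condition combined with $|q_l|=|Q_l|$. Your final bookkeeping of the coefficients, via $|Q_k|^2|Q_{k-1}|^2|Q_{k-2}|^2=1$ and $|Q_{j+3}|=|Q_j|$, checks out.
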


\begin{remark}\label{remqtof}
If the values of a function $q^\eps$ all lie in the upper half-plane $\{z\in\C 
: \text{Im}(z)>0\}$ and $\sum_{k=1}^6 \arg(q_k(v))=4\pi$ holds for all interior 
vertices $v$, then equations~\eqref{eqq2}--\eqref{eqq3} guarantee that 
$q^\eps$ corresponds to a discrete conformal map on (a part of) a triangular 
lattice $T_S$. Indeed, start with any triangle of $T_S$ and map it to any 
triangle in $\C$ respecting orientation. This defines the values of the 
discrete conformal map $g$ on the 
first triangle. Then the values of $g$ on all incident triangles can now be 
uniquely determined using~\eqref{eqdefq}. Our additional assumptions on 
$q^\eps$ show that the pattern is immersed. If $T_S$ is the triangulation of a 
simply connected 
domain, this procedure subsequently defines $g$ on all vertices. 
Equations~\eqref{eqq1}--\eqref{eqq3} guarantee that no ambiguities will occur.
\end{remark}

\begin{proof}[Proof of Lemma~\ref{lemeqq}.]
First note that~\eqref{eqq1} is an easy consequence of the fact that $q_k$ are 
cross-ratios of a flower. Furthermore, \eqref{eqq3} follows from~\eqref{eqq1} 
and~\eqref{eqq2} by taking the complex conjugation of~\eqref{eqq2} because 
$|q_k(v)|=|Q_k|$ using~\eqref{eqdefdiscf}.

In order to see~\eqref{eqq2},
add circumcircles to the triangles of the flower around $v_0$ and map $v_0$ to 
$\infty$ by a M\"obius transformation as illustrated in 
Figure~\ref{figFlowerMoeb} (right).
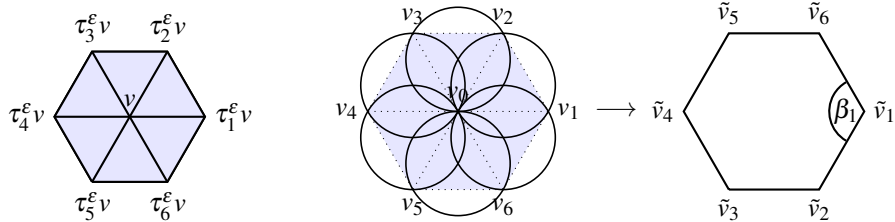
\begin{figure}[ht]
\begin{tikzpicture}[thick, scale=0.5]
  \coordinate [label={right:$\tau_1^\eps v$}] (B) at (2cm,0);
  \coordinate [label={above:$\tau_2^\eps v$}] (C) at (1cm, 1.732cm);
\coordinate [label={below:$\tau_6^\eps v$}] (D) at (1cm, -1.732cm);
\coordinate [label={left:$\tau_4^\eps v$}] (E) at (-2cm,0);
  \coordinate [label={above:$\tau_3^\eps v$}] (F) at (-1cm, 1.732cm);
\coordinate [label={below:$\tau_5^\eps v$}] (G) at (-1cm, -1.732cm);

\draw [thick] (G) -- (D) -- (B) -- (C) -- (F) -- (E) -- (G); 
\filldraw[fill=blue!10!white] (G) -- (D) -- (B) -- (C) -- (F) -- (E) -- (G);
 \coordinate [label={above:$v$}] (A) at (0, 0);
  \draw [thick] (A) -- (B) ;
\draw [thick] (A) -- (C) ;
\draw [thick] (A) -- (D) ;
\draw [thick] (A) -- (E) ;
\draw [thick] (A) -- (F) ;
\draw [thick] (A) -- (G) ;
\end{tikzpicture}
\hspace{2em}
\begin{tikzpicture}[scale=0.6]
  \coordinate [label={right:$v_1$}] (B) at (2cm,0);
  \coordinate [label={above:$v_2$}] (C) at (1cm, 1.732cm);
\coordinate [label={below:$v_6$}] (D) at (1cm, -1.732cm);
\coordinate [label={left:$v_4$}] (E) at (-2cm,0);
  \coordinate [label={above:$v_3$}] (F) at (-1cm, 1.732cm);
\coordinate [label={below:$v_5$}] (G) at (-1cm, -1.732cm);

\filldraw[dotted, fill=blue!10!white] (G) -- (D) -- (B) -- (C) -- (F) -- (E) -- 
(G);
 \coordinate [label={above:$v_0$}] (A) at (0, 0);
  \draw [dotted] (A) -- (B) ;
\draw  [dotted] (A) -- (C) ;
\draw [dotted] (A) -- (D) ;
\draw [dotted] (A) -- (E) ;
\draw [dotted] (A) -- (F) ;
\draw [dotted] (A) -- (G) ;

    \tkzCircumCenter(A,B,C)\tkzGetPoint{G1}
    \tkzDrawCircle(G1,A)
  \tkzCircumCenter(A,B,D)\tkzGetPoint{G2}
    \tkzDrawCircle(G2,A)
  \tkzCircumCenter(A,C,F)\tkzGetPoint{G3}
    \tkzDrawCircle(G3,A)
  \tkzCircumCenter(A,F,E)\tkzGetPoint{G4}
    \tkzDrawCircle(G4,A)
  \tkzCircumCenter(A,E,G)\tkzGetPoint{G5}
    \tkzDrawCircle(G5,A)
  \tkzCircumCenter(A,G,D)\tkzGetPoint{G6}
    \tkzDrawCircle(G6,A)

\draw (3.5cm,0) node {$\longrightarrow$};

 \coordinate [label={right:$\tilde{v}_1$}] (B1) at (9cm,0);
  \coordinate [label={above:$\tilde{v}_6$}] (C1) at (8cm, 1.732cm);
\coordinate [label={below:$\tilde{v}_2$}] (D1) at (8cm, -1.732cm);
\coordinate [label={left:$\tilde{v}_4$}] (E1) at (5cm,0);
  \coordinate [label={above:$\tilde{v}_5$}] (F1) at (6cm, 1.732cm);
\coordinate [label={below:$\tilde{v}_3$}] (G1) at (6cm, -1.732cm);
\draw [thick] (G1) -- (D1) -- (B1) -- (C1) -- (F1) -- (E1) -- (G1);

\draw (8.6cm,0) node {$\beta_1$};
\draw[thick]  (8.6cm,-0.65cm) arc (240:120:0.75cm);
\end{tikzpicture}
\caption{{\it Left:} A flower about $v$ in the lattice $\eps T$. {\it Right:} 
Mapping the circumcircles of a flower by a M\"obius 
transformation with $v_0\mapsto\infty$.}\label{figFlowerMoeb}
\end{figure}

As the M\"obius transformation does not change the values of the $q_k$'s, we 
easily 
identify equations~\eqref{eqq1} and~\eqref{eqq2} as the closing conditions for 
the image polygon.
\end{proof}


Our next goal is to derive from~\eqref{eqq1}--\eqref{eqq3} an expression for 
the Laplacian of the discrete Schwarzians
$\Delta^\eps s_k(v)$ which equals a polynomial in $\eps, s_j(v), \tau_l 
s_m(v)$ for $j,l,m\in\{1,\dots,6\}$. Then, if all discrete Schwarzians $s_k$ 
are uniformly 
bounded for small $\eps\in(0,\eps_0)$ and on $V_K^\eps$, also all Laplacians 
$\Delta^\eps s_k(v)$ are uniformly bounded on $(V_K^\eps)^{(1)}$.

For an interior vertex $v$ substitute $q_k(v)=Q_k(1+\eps^2s_k(v))$ 
in~\eqref{eqq1}--\eqref{eqq3}, and obtain (using for example a computer algebra 
program):
\begin{align}
 \sum_{k=1}^6 s_k(v) =&\eps^2 \Phi(v)\label{eqPhi}
\end{align}
\begin{multline}
s_k(v) + s_{k+1}(v) + s_{k+2}(v)- Q_k (s_{k+1}(v)+s_{k+2}(v) + s_{k+3}(v)) 
\\
  +Q_kQ_{k+1} (s_{k+2}(v) +s_{k+3}(v) +s_{k+4}(v))
     =\eps^2\Psi_k(v) \label{eqPsi}
\end{multline}
\begin{multline}
s_k(v) + s_{k-1}(v) + s_{k-2}(v)-  \frac{1}{\overline{Q_k}} 
(s_{k-1}(v)+s_{k-2}(v) + s_{k-3}(v)) \\
  +\frac{1}{\overline{Q_k}\overline{Q_{k-1}}} (s_{k-2}(v) 
+s_{k-3}(v) +s_{k-4}(v)) =\eps^2\Theta_k(v) \label{eqTheta}
\end{multline}

\begin{align*}
\text{ where }\qquad\qquad
\Phi(v)=& - \sum_{k=1}^5\sum_{l=k+1}^6 s_k(v)s_l(v) 
- \eps^2\sum_{\substack{k_1,k_2,k_3\in\{1,\dots,6\} \\ k_1<k_2<k_3}} 
s_{k_1}(v)s_{k_2}(v)s_{k_3}(v) \notag \\
 &\quad - \eps^4\sum_{\substack{k_1,k_2,k_3,k_4\in\{1,\dots,6\} \\ 
k_1<k_2<k_3<k_4}} 
s_{k_1}(v)s_{k_2}(v)s_{k_3}(v)s_{k_4}(v) \notag \\
&\quad - \eps^6\sum_{\substack{k_1,k_2,k_3,k_4,k_5\in\{1,\dots,6\} \\ 
k_1<k_2<k_3<k_4<k_5}} 
s_{k_1}(v)s_{k_2}(v)s_{k_3}(v)s_{k_4}(v)s_{k_5}(v) \notag \\
&\quad - \eps^8 s_1(v) s_2(v) s_3(v) s_4(v) s_5(v) s_6(v), \notag
\end{align*}

\begin{align*}
\Psi_k(v)&= -  \sum_{\substack{k_1,k_2\in\{k,\dots,k+2\} \\ k_1<k_2}} 
s_{k_1}(v)s_{k_2}(v) 
+Q_k\sum_{\substack{k_1,k_2\in\{k,\dots,k+3\} \\ k_1<k_2}} 
s_{k_1}(v)s_{k_2}(v) \notag \\ 
&\qquad\qquad -Q_kQ_{k+1}\sum_{\substack{k_1,k_2\in\{k,\dots,k+4\} \\ k_1<k_2}} 
s_{k_1}(v)s_{k_2}(v) \notag \\
&\quad +\eps^2\left( s_k(v) s_{k+1}(v) s_{k+2}(v) - 
Q_k \sum_{\substack{k_1,k_2,k_3\in\{k,\dots,k+3\} \\ k_1<k_2<k_3}} 
s_{k_1}(v)s_{k_2}(v)s_{k_3}(v)\right. \notag \\
&\qquad\qquad\left. +Q_kQ_{k+1} \sum_{\substack{k_1,k_2,k_3\in\{k,\dots,k+4\} 
\\ 
k_1<k_2<k_3}} s_{k_1}(v)s_{k_2}(v)s_{k_3}(v)\right) \notag \\
&\quad +\eps^4\left(-Q_k s_k(v) s_{k+1}(v) s_{k+2}(v) s_{k+3}(v) \right. \notag 
\\
&\qquad\qquad\left. +Q_kQ_{k+1} 
\sum_{\substack{k_1,k_2,k_3,k_4\in\{k,\dots,k+4\} \\ 
k_1<k_2<k_3<k_4}} s_{k_1}(v)s_{k_2}(v)s_{k_3}(v)s_{k_4}(v)\right) \notag \\
&\quad +\eps^6 Q_kQ_{k+1} s_k(v) s_{k+1}(v) s_{k+2}(v) s_{k+3}(v) s_{k+4}(v), 
\notag
%
\end{align*}

\begin{align*}
\Theta_k(v)&= -\sum_{\substack{k_1,k_2\in\{k-2,\dots,k\} \\ k_1<k_2}} 
s_{k_1}(v)s_{k_2}(v) 
+\frac{1}{\overline{Q_k}} \sum_{\substack{k_1,k_2\in\{k-3,\dots,k\} \\ 
k_1<k_2}} 
s_{k_1}(v)s_{k_2}(v) \notag \\
&\qquad\qquad -\frac{1}{\overline{Q_k}\overline{Q_{k-1}}} 
\sum_{\substack{k_1,k_2\in\{k-4,\dots,k\} \\ k_1<k_2}} 
s_{k_1}(v)s_{k_2}(v) \notag \\
&\quad +\eps^2\left(s_k(v) s_{k-1}(v) s_{k-2}(v) -
\frac{1}{\overline{Q_k}} \sum_{\substack{k_1,k_2,k_3\in\{k-3,\dots,k\} \\ 
k_1<k_2<k_3}} s_{k_1}(v)s_{k_2}(v)s_{k_3}(v) \right. \notag \\
&\qquad\qquad \left.+\frac{1}{\overline{Q_k}\overline{Q_{k-1}}} 
\sum_{\substack{k_1,k_2,k_3\in\{k-4,\dots,k\} \\ 
k_1<k_2<k_3}} s_{k_1}(v)s_{k_2}(v)s_{k_3}(v)\right) \notag \\
&\quad +\eps^4\left(-\frac{1}{\overline{Q_k}} s_k(v) s_{k-1}(v) s_{k-2}(v) 
s_{k-3}(v) \right. \\
&\qquad\qquad +\left. \frac{1}{\overline{Q_k}\overline{Q_{k-1}}} 
\sum_{\substack{k_1,k_2,k_3,k_4\in\{k-4,\dots,k\} \\ 
k_1<k_2<k_3<k_4}} s_{k_1}(v)s_{k_2}(v)s_{k_3}(v)s_{k_4}(v)\right) \notag \\
&\quad +\eps^6 \frac{1}{\overline{Q_k}\overline{Q_{k-1}}} s_k(v) s_{k-1}(v) 
s_{k-2}(v) s_{k-3}(v) s_{k-4}(v) \notag
\end{align*}

\begin{lemma}\label{lemDeltas}
 $\Delta^\eps s_k(v)$ is equal to a polynomial in the variables $\eps, s_j(v),
 \tau_m^\eps s_l(v)$, $j,m,l\in\{1,\dots,6\}$. In particular,
 \begin{align*}
\Delta^\eps s_k = {\frac{|Q_k|}{4i L_k^2Q_k}} \Big( &
  \overline{Q_{k+1}}
(\tau_{k-1}^\eps\Psi_k+ \tau_{k+1}^\eps\Psi_{k+3})
  +Q_1Q_2
(\tau_{k-1}^\eps\Theta_{k+3} +\tau_{k+1}^\eps\Theta_k) \\
  &-{\textstyle \frac{L_k^2}{L_{k+1}^2L_{k-1}^2}} 
(\tau_{k-1}^\eps\Phi +\tau_{k+1}\Phi)
+ ({\textstyle \frac{1}{\overline{Q_k}}} -1) 
(\tau_k^\eps\Psi_k +\Psi_{k+3}) \\[0.5ex]
& \left. +(1-Q_k)(\tau_k^\eps\Theta_{k+3} +\Theta_k)
  + \overline{Q_{k+1}}(Q_k-1) (\tau_k^\eps\Phi+\Phi)  \right) ,
\end{align*}
 where the indices are taken modulo $6$. 
\end{lemma}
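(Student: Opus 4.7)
The plan is to exploit the three families of exact identities \eqref{eqPhi}, \eqref{eqPsi}, \eqref{eqTheta}, each of which writes a specific linear combination of the six quantities $s_1(v),\dots,s_6(v)$ as $\eps^2$ times an explicit polynomial in the same $s_j(v)$ (namely $\Phi(v)$, $\Psi_k(v)$, or $\Theta_k(v)$). These identities are obtained by direct substitution of $q_k(v)=Q_k(1+\eps^2 s_k(v))$ into the closing relations \eqref{eqq1}--\eqref{eqq3} of Lemma~\ref{lemeqq} and separating degree-one from higher-degree terms in the $s_j(v)$. Since the closing relations hold at every interior vertex, the same identities are available at each neighbor of $v$ as well.

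With these identities in hand, I would form precisely the linear combination written on the right-hand side of the lemma's display, namely a weighted sum of $\Psi_k, \Psi_{k+3}, \Theta_k, \Theta_{k+3}, \Phi$ evaluated at $v$, $\tau_{k-1}^\eps v$, $\tau_k^\eps v$, and $\tau_{k+1}^\eps v$. The statement to verify is that when one substitutes the linear LHS of each identity in place of the polynomial RHS, the resulting combination collapses to $\eps^2 \cdot \frac{4iL_k^2 Q_k}{|Q_k|}\cdot \Delta^\eps s_k(v)$. Once this algebraic identity is checked, the lemma is immediate: the RHS of the combination is, by construction, $\eps^2$ times a polynomial in $\eps$ and in the values $s_j(v)$, $\tau_m^\eps s_l(v)$, so dividing out by the prefactor yields the claimed formula for $\Delta^\eps s_k(v)$, and in particular shows that $\Delta^\eps s_k(v)$ is polynomial in the stated variables.

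The collapse of the combination to $\eps^2\cdot(\text{const})\cdot\Delta^\eps s_k(v)$ is the technical core of the proof and the main obstacle. Each individual equation contributes several $s_j(\tau_i^\eps v)$ terms, and the translation symmetry \eqref{eqsk+}, which identifies $s_{k+3}(\tau_k^\eps v)$ with $s_k(v)$, must be applied repeatedly in order to collect coefficients across the four vertices and match them against the six Laplacian coefficients $i(\omega_j^2+1)/(\omega_j^2-1)$. The cancellations are driven by the structural relations $L_1\omega_1 - L_2\omega_2 + L_3\omega_3 = 0$, $|Q_k|=L_{k-1}^2/L_{k+1}^2$, and the cotangent identity $i(\omega_j^2+1)/(\omega_j^2-1)=\cot(\arg\omega_j)$ that underlies $\Delta^\eps$, which makes the verification particularly well-suited to a computer-algebra check. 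Matching the coefficient of each $s_j(\tau_i^\eps v)$ on both sides pins down uniquely the prefactors $\overline{Q_{k+1}}$, $Q_1Q_2$, $L_k^2/(L_{k+1}^2L_{k-1}^2)$, $\frac{1}{\overline{Q_k}}-1$, $1-Q_k$, and $\overline{Q_{k+1}}(Q_k-1)$ that appear in the stated formula; the explicit polynomial right-hand side is then read off directly from the translated $\Phi$, $\Psi_k$, $\Theta_k$.
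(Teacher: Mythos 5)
Your proposal is correct and follows essentially the same route as the paper: the paper also substitutes $q_k=Q_k(1+\eps^2 s_k)$ into the closing relations, evaluates \eqref{eqPhi}--\eqref{eqTheta} at the four vertices $v$, $\tau_{k-1}^\eps v$, $\tau_k^\eps v$, $\tau_{k+1}^\eps v$, and combines them with exactly the stated weights so that, after using \eqref{eqsk+}, the linear parts assemble into the cotangent weights of $\Delta^\eps s_k$. The only cosmetic difference is that the paper runs the computation forward (displaying four intermediate identities with $\cot\alpha,\cot\beta,\cot\gamma$ coefficients and summing them), whereas you verify the given combination backward; the algebra is the same.
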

\begin{proof}
 We consider the case $k=1$. Fix $v_0\in (V_K^\eps)^{(2)}$. Denote 
$v_1=\tau_1^\eps v_0$,
 $v_2=\tau_2^\eps v_0$ and $v_6=\tau_6^\eps v_0$.
 
 First recall that $s_{4}(\tau_4^\eps v)=s_1(v)$ and $s_{1}(\tau_1^\eps 
v)=s_4(v)$. Thus $\Delta^\eps s_1(v)$ only involves the values of $s_1$ and 
$s_4$ at $v_0,v_1,v_2,v_6$.
 
 Take $(-\frac{1}{4i}\frac{\sin^2\alpha}{\sin^2\beta \sin^2\gamma})$ 
 times~\eqref{eqPhi}
 and add $\frac{1}{4i}\frac{\text{e}^{2i\beta}}{\sin^2\beta} 
=\frac{\overline{Q_{2}}|Q_1|}{4i L_1^2Q_1}$ times~\eqref{eqPsi} for $k=1$
 and $\frac{1}{4i}\frac{\text{e}^{2i\gamma}}{\sin^2\gamma} 
=\frac{Q_1Q_{2}|Q_1|}{4i L_1^2Q_1}$ times~\eqref{eqTheta} for $k=4$.
 After simplification we obtain
 \begin{multline*}
  \cot\beta\, s_1(v_6)+\cot\gamma\; s_4(v_6) +  
(\cot\beta+\cot\gamma)(s_2(v_6)+s_3(v_6)) \\
  = \frac{\eps^2}{4i}\left( \frac{\text{e}^{2i\beta}}{\sin^2\beta} \Psi_1(v_6)
  +\frac{\text{e}^{2i\gamma}}{\sin^2\gamma}\Theta_4(v_6)
  -\frac{\sin^2\alpha}{\sin^2\beta \sin^2\gamma} \Phi(v_6)\right).
 \end{multline*}
By cyclic permutation we also have
\begin{multline*}
  \cot\gamma\; s_1(v_2)+\cot\beta\, s_4(v_2) +   
(\cot\beta+\cot\gamma)(s_5(v_2)+s_6(v_2)) \\
  = \frac{\eps^2}{4i}\left( \frac{\text{e}^{2i\beta}}{\sin^2\beta} \Psi_4(v_2)
  +\frac{\text{e}^{2i\gamma}}{\sin^2\gamma}\Theta_1(v_2)
  -\frac{\sin^2\alpha}{\sin^2\beta \sin^2\gamma} \Phi(v_2)\right).
 \end{multline*}
 Now combine $\frac{1}{4i}(\frac{\text{e}^{2i\gamma}}{\sin^2\gamma}
 -\frac{\text{e}^{2i\beta}}{\sin^2\beta})= 
\frac{(Q_1-1)\overline{Q_{2}}|Q_1|}{4i L_1^2Q_1}$  times~\eqref{eqPhi} with 
$\frac{1}{4i}(\frac{1}{\sin^2\beta}
 +2\cot\alpha\frac{\text{e}^{i\beta}}{\sin^2\beta}) 
=\frac{(1/\overline{Q_1} -1)|Q_1|}{4i L_1^2Q_1}$ times~\eqref{eqPsi} 
 for $k=1$ and $\frac{1}{4i}(-\frac{1}{\sin^2\gamma}
 -2\cot\alpha\frac{\text{e}^{i\gamma}}{\sin^2\gamma}) =\frac{(1-Q_1) 
|Q_1|}{4i L_1^2Q_1}$ times~\eqref{eqTheta} for $k=4$. This gives
 \begin{multline*}
  \cot\alpha\; s_1(v_1)-(\cot\alpha +\cot\beta +\cot\gamma) s_4(v_1) - 
    (\cot\beta+\cot\gamma)(s_3(v_1)+s_5(v_1)) \\
  = {\textstyle \frac{\eps^2}{4i}}\left( ({\textstyle \frac{1}{\sin^2\beta}}
 + {\textstyle \frac{2\text{e}^{i\beta}\cot\alpha}{\sin^2\beta}})\Psi_1(v_1)
  -({\textstyle \frac{1}{\sin^2\gamma}}
 +{\textstyle  \frac{2\text{e}^{i\gamma}\cot\alpha}{\sin^2\gamma}})\Theta_4(v_1)
  + ({\textstyle \frac{\text{e}^{2i\gamma}}{\sin^2\gamma}}
 -{\textstyle \frac{\text{e}^{2i\beta}}{\sin^2\beta}})\Phi(v_1)\right).
 \end{multline*}
 By cyclic permutation we also have
  \begin{multline*}
  \cot\alpha\; s_4(v_0)-(\cot\alpha +\cot\beta +\cot\gamma) s_1(v_0) - 
    (\cot\beta+\cot\gamma)(s_2(v_0)+s_6(v_0)) \\
  = {\textstyle \frac{\eps^2}{4i}}( ({\textstyle \frac{1}{\sin^2\beta}}
 + {\textstyle \frac{2\text{e}^{i\beta}\cot\alpha}{\sin^2\beta}})\Psi_4(v_0)
  -({\textstyle \frac{1}{\sin^2\gamma}}
 + {\textstyle \frac{2\text{e}^{i\gamma}\cot\alpha}{\sin^2\gamma}})\Theta_1(v_0)
  + ({\textstyle \frac{\text{e}^{2i\gamma}}{\sin^2\gamma}}
 -{\textstyle \frac{\text{e}^{2i\beta}}{\sin^2\beta}})\Phi(v_0)).
 \end{multline*}
 Adding up these four equations and dividing by $\eps^2$ we finally
 arrive at
 \begin{align*}
  \Delta^\eps s_1(v)= {\textstyle \frac{1}{4i}}&  \Big( 
  {\textstyle \frac{\text{e}^{2i\beta}}{\sin^2\beta}} (\Psi_1(v_6)+\Psi_4(v_2))
  +{\textstyle \frac{\text{e}^{2i\gamma}}{\sin^2\gamma}} 
(\Theta_4(v_6)+\Theta_1(v_2)) \\
  &-{\textstyle \frac{\sin^2\alpha}{\sin^2\beta \sin^2\gamma}} 
(\Phi(v_6)+\Phi(v_2)) +\left({\textstyle \frac{1}{\sin^2\beta}}
 +{\textstyle \frac{2 \text{e}^{i\beta}\cot\alpha}{\sin^2\beta}}\right) 
(\Psi_1(v_1)+\Psi_4(v_0)) \\
& \left. -\left({\textstyle \frac{1}{\sin^2\gamma}}
 +{\textstyle \frac{2 \text{e}^{i\gamma}\cot\alpha}{\sin^2\gamma}}\right) 
(\Theta_4(v_1) +\Theta_1(v_0))
  + \left({\textstyle \frac{\text{e}^{2i\gamma}}{\sin^2\gamma}}
 -{\textstyle \frac{\text{e}^{2i\beta}}{\sin^2\beta}}\right) 
(\Phi(v_1)+\Phi(v_0))  \right)
 \end{align*}
Again, we have used~\eqref{eqsk+}. This proves the lemma for $k=1$. For other 
values of $k$ the lemma is also true by symmetry.
\end{proof}

\section{Boundedness and convergence of the discrete 
Schwarzians}\label{SecBound}

We start by showing that we used a suitable order of $\eps$ in the definition 
of the discrete Schwarzians, as they are bounded in the limit $\eps\to 0$.

\begin{lemma}\label{lembound}
 Let $v_0\in V_K^\eps$ be an interior vertex. Then for $\eps$ small enough
\begin{equation}\label{eqskbound}
|s_k(v_0)|=\eps^{-2}|q_k(v_0)/Q_k -1|\leq C
\end{equation}
for some constant $C$, which depends only on $\alpha,\beta,\gamma$, $K$ and  
$f$.
\end{lemma}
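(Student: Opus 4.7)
The plan is to reduce the bound to an estimate on an argument, then compare $q_k^\eps$ to a smooth cross--ratio via local triangle rigidity. The length--cross--ratio condition built into Definition~\ref{defdcm} forces $|q_k^\eps(v_0)| = |Q_k|$, so $q_k^\eps(v_0)/Q_k$ lies on the unit circle. Writing this quotient as $e^{i\theta_k}$, one has $|q_k^\eps(v_0)/Q_k - 1| = 2|\sin(\theta_k/2)| \le |\theta_k|$, and the task is reduced to proving $|\theta_k| = O(\eps^2)$.

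My first step is to introduce the ``smooth'' cross--ratio $q_k^f := \cro(f(v_0), f(w_2), f(\tau_k^\eps v_0), f(w_1))$ obtained by applying the holomorphic map $f$ to the same four lattice vertices appearing in~\eqref{eqdefq}. The limit formula for the Schwarzian stated in the introduction, applied with scaling parameter $\eps$, gives immediately $q_k^f/Q_k = 1 + O(\eps^2)$, with the constant depending only on $\|{\mathfrak S}[f]\|_K$ and on the lattice. It then remains to show $q_k^\eps(v_0) - q_k^f = O(\eps^2)$.

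For this, I would use the scale--factor formula $|f^\eps(v) - f^\eps(w)| = |v-w| e^{(u^\eps(v)+u^\eps(w))/2}$ together with the estimate $u^\eps = \log|f'| + O(\eps^2)$ from Theorem~\ref{theoConvalt}(i), and Taylor--expand $\log|f'|$ around the edge midpoint. Comparing with the Taylor expansion $|f(v) - f(w)| = |v-w| \cdot |f'(\tfrac{v+w}{2})| \cdot (1 + O(\eps^2))$, the edge lengths of $f^\eps(\Delta_j)$ and $f(\Delta_j)$ agree up to relative error $O(\eps^2)$, i.e.\ absolute error $O(\eps^3)$. Each image triangle is rigidly determined by its three side lengths and its orientation, so after aligning $f^\eps(\Delta_1) \cup f^\eps(\Delta_2)$ with $f(\Delta_1) \cup f(\Delta_2)$ by a suitable Euclidean motion (using that both quadrilaterals are glued along the image of the common edge on the correct side, by orientation--preservation), the four image vertices under $f^\eps$ differ from those under $f$ by $O(\eps^3)$. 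Since the cross--ratio is Euclidean--invariant but has gradient of order $1/\eps$ in each vertex when the four vertices are $\eps$--separated, the corresponding cross--ratio perturbation is $O(\eps^3 \cdot \eps^{-1}) = O(\eps^2)$. Combining, $q_k^\eps(v_0)/Q_k - 1 = O(\eps^2)$, which gives $|s_k(v_0)| \le C$.

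The main obstacle will be making the triangle--rigidity step rigorous: Theorem~\ref{theoConvalt}(ii) gives only the much weaker global bound $|f^\eps - f| = O(\eps)$, which when plugged into the cross--ratio Lipschitz estimate would yield an $O(1)$ error, far too weak. The key insight that rescues the argument is that the \emph{local} shape of a pair of adjacent image triangles is dictated by scale factors alone (up to a rigid motion), so the shape--level comparison bypasses any accumulated rotation or translation discrepancy that $f^\eps$ may develop on the macroscopic scale. Carefully isolating this local shape information, and verifying that the rigid motion aligning $f^\eps(\Delta_1)$ with $f(\Delta_1)$ also aligns $f^\eps(\Delta_2)$ with $f(\Delta_2)$ up to $O(\eps^3)$, is the technical heart of the proof.
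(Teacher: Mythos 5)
Your proposal is correct in outline, but it takes a genuinely different route from the paper in the key step. Both arguments start the same way (length--cross--ratio preservation gives $|q_k|=|Q_k|$, so only the argument can deviate) and both ultimately rest on the same essential input, namely estimate~\eqref{eqconvu} of Theorem~\ref{theoConvalt}(i) together with the side-length formula~\eqref{eqdefdiscf}. But where you interpolate through the smooth cross-ratio $q_k^f=\cro(f(v_0),f(w_2),f(\tau_k^\eps v_0),f(w_1))$ --- controlling $q_k^f/Q_k-1=O(\eps^2)$ by the Schwarzian limit formula and $q_k^\eps-q_k^f=O(\eps^2)$ by side-length comparison, triangle rigidity, and the $O(\eps^{-1})$ Lipschitz bound for the cross-ratio on $\eps$-separated quadruples --- the paper never introduces $q_k^f$ at all. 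Instead it identifies $\arg q_k$ as the sum of the two angles opposite the common edge in the two image triangles, expresses these angles through the half-angle formula~\eqref{defalpha1} purely in terms of the side lengths (hence of the scale-factor differences $d_j$), substitutes $\log|f'|$ for $u^\eps$ via~\eqref{eqconvu}, and Taylor-expands to get $\varphi_k=\phi_k+O(\eps^2)$. Your route is conceptually attractive: it avoids the trigonometric/computer-algebra expansion, works with the full complex quantity $q_k/Q_k$ rather than just its argument, and anticipates the identification of $\lim s_k$ with ${\mathfrak S}[f]$ in Theorem~\ref{theoConvS}. The price is that two steps you label as immediate need genuine care: the Schwarzian limit formula must be upgraded to a quantitative $O(\eps^2)$ bound uniform over $v_0\in K$ (routine Taylor expansion of $f$ on a slightly larger compact set, but not literally what is stated in the introduction), and the rigidity step must track that the Euclidean motion aligning the first pair of image triangles also aligns the second to within $O(\eps^3)$, using non-degeneracy of the image triangles (angles bounded away from $0$ and $\pi$, which itself follows from the scale-factor estimate). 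The paper's computation sidesteps all positional bookkeeping by working only with lengths and angles, at the cost of an opaque symbolic expansion.
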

\begin{proof}
By our assumptions, the maps $f^\eps$ are discrete conformal. This means in 
particular by Definition~\ref{defdcm}, that the absolute values of the 
cross-ratios $q_k$ and $Q_k$ agree. Thus 
\begin{equation}\label{eqskIm}
 s_k(v_0) =\frac{1}{\eps^{2}}(\text{e}^{i(\arg q_k-\arg Q_k)}-1)= 
\frac{2i}{\eps^{2}}\sin((\arg q_k-\arg Q_k)/2)\text{e}^{i(\arg 
q_k-\arg Q_k)/2}. 
\end{equation}
Recall that $\arg Q_k = \arg (\omega_{k-1}^2/\omega_{k+1}^2)$. If we take $\arg 
Q_k=:\phi_k\in(0,2\pi)$, then $\phi_k$ is the sum of the two 
angles opposite to the edge $\omega_kL_k$ in the triangles in the lattice $T$
(and in fact $\phi_k\in (0,\pi)$ as we assumed strictly acute angles). 
Similarly, the angle $\arg q_k=:\varphi_k(v_0)\in(0,2\pi)$ is the interior 
intersection angle of the 
circumcircles of the two image triangles used for the cross-ratio $q_k$. In 
particular, $\varphi_k(v_0)$ is the sum of the two angles in the image pattern 
$f^\eps(T_K^\eps)$ opposite to the edge $f^\eps([v_0,v_0+\eps\omega_kL_k])$. 
Thus~\eqref{eqskbound} is satisfied if we can show that $|\varphi_k(v_0)- 
\phi_k| \leq \hat{C}\eps^2$ holds for some constant $\hat{C}$ which is 
independent of $v_0$. 
For the calculation of the angles in the triangles, we use the following 
half-angle formula
\begin{equation}\label{defalpha1}
 \tan\left(\frac{\alpha}{2}\right) = 
\sqrt{\frac{(-b+a+c)(-c+a+b)}{(b+c-a)(a+b+c)}}
\end{equation}
with the notation of Figure~\ref{figTriang}.
%
The discrete conformality~\eqref{eqdefdiscf} together with~\eqref{defalpha1} 
implies that
\begin{align*}
\frac{\varphi_k(v_0)}{2} &= \arctan\left( 
\sqrt{\frac{(L_k\text{e}^{-d_{k+1}} +L_{k+1}\text{e}^{-d_{k}} 
-L_{k-1}) (L_k\text{e}^{-d_{k+1}} -L_{k+1}\text{e}^{-d_{k}} 
+L_{k-1})}{(-L_k\text{e}^{-d_{k+1}} +L_{k+1}\text{e}^{-d_{k}} 
+L_{k-1}) (L_k\text{e}^{-d_{k+1}} +L_{k+1}\text{e}^{-d_{k}} 
+L_{k-1})}} \right) \\
&\ \ +\arctan\left( \sqrt{\frac{(L_k\text{e}^{-d_{k-1}} 
+L_{k-1}\text{e}^{-d_{k}} -L_{k+1}) (L_k\text{e}^{-d_{k-1}} 
-L_{k-1}\text{e}^{-d_{k}} 
+L_{k+1})}{(-L_k\text{e}^{-d_{k-1}} +L_{k-1}\text{e}^{-d_{k}} 
+L_{k+1}) (L_k\text{e}^{-d_{k-1}} +L_{k-1}\text{e}^{-d_{k}} 
+L_{k+1})}} \right),
\end{align*}
where we denote by $d_j=(u(v_0+\eps\omega_jL_j)- u(v_0))/2$ for $j=k-1,k,k+1$ 
the differences of the logarithmic scale factors. 
Then~\eqref{eqconvu} and the uniform convergence of $f^\eps$ implies 
that for $\eps>0$ small enough we can write
\begin{align*}
\frac{\varphi_k(v_0)}{2} &= \arctan\left( 
\sqrt{\frac{(L_k\text{e}^{-g_{k+1}} +L_{k+1}\text{e}^{-g_{k}} 
-L_{k-1}) (L_k\text{e}^{-g_{k+1}} -L_{k+1}\text{e}^{-g_{k}} 
+L_{k-1})}{(-L_k\text{e}^{-g_{k+1}} +L_{k+1}\text{e}^{-g_{k}} 
+L_{k-1}) (L_k\text{e}^{-g_{k+1}} +L_{k+1}\text{e}^{-g_{k}} 
+L_{k-1})}} \right) \\
&\ \ +\arctan\left( \sqrt{\frac{(L_k\text{e}^{-g_{k-1}} 
+L_{k-1}\text{e}^{-g_{k}} -L_{k+1}) (L_k\text{e}^{-g_{k-1}} 
-L_{k-1}\text{e}^{-g_{k}} 
+L_{k+1})}{(-L_k\text{e}^{-g_{k-1}} +L_{k-1}\text{e}^{-g_{k}} 
+L_{k+1}) (L_k\text{e}^{-g_{k-1}} +L_{k-1}\text{e}^{-g_{k}} 
+L_{k+1})}} \right) \\
&\ \ + {\cal O}(\eps^2),
\end{align*}
where we denote $g_j= (\log|f'(v_0+\eps\omega_jL_j)|-\log|f'(v_0)|)/2$ for 
$j=k-1, k,k+1$. The notation  $h(\eps)={\cal O}(\eps^n)$ means that there is a
constant $\cal C$, such that $|h(\eps)|\leq {\cal C}\eps^n$ holds for all small 
enough $\eps>0$. Note that the constant is independent of $v_0$ due to 
estimate~\eqref{eqconvu}. Now a Taylor expansion in $\eps$ (using for example a 
computer algebra program) shows that
\begin{align*}
 \frac{\varphi_k(v_0)}{2}&= \frac{\phi_k}{2} + {\cal O}(\eps^2).
\end{align*}
This implies that $|\varphi_k- \phi_k| \leq \hat{C}\eps^2$ for some constant 
$\hat{C}$ (independent of $v_0$) and finally~\eqref{eqskbound} for $\eps$ small 
enough.
\end{proof}

The following lemma on regularity of solutions of discrete elliptic equations 
constitutes another main ingredient for our convergence proof.

\begin{lemma}[Regularity lemma]\label{lemReg}
 Let $W$ be a subset of $V^\eps$. Let $v_0\in W^{(1)}$ and let $\delta$ be the 
Euclidean distance from $v_0$ to $V^\eps\setminus W$. Let $\eta:W\to\R$ be any 
function. Then
\[\delta |\partial_k^\eps\eta(v_0)|\leq 7 \|\eta\|_W + 
\frac{\delta^2}{C_{\alpha\beta\gamma}}\|\Delta^\eps\eta\|_{W^{(1)}}\]
holds for $k=1,\dots,6$, where $C_{\alpha\beta\gamma}=\Delta^\eps 
x^2=\Delta^\eps y^2 = (\sin(2\alpha)+\sin(2\beta) +\sin(2\gamma))/2$. 
\end{lemma}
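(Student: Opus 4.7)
The plan is to follow the structure of the analogous \cite[Lemma~2.3]{HeSch98} for hexagonal circle packings, adapting the barrier arguments to our triangular lattice $\eps T$. The key observation, built into the statement, is that the quadratic
\[r(v):=\frac{|v-v_0|^2}{2C_{\alpha\beta\gamma}}\]
is a universal barrier: by the assumption $\Delta^\eps x^2=\Delta^\eps y^2=C_{\alpha\beta\gamma}$ one has $\Delta^\eps r\equiv 1$ on $W^{(1)}$. This lets us absorb the inhomogeneous term $\Delta^\eps\eta$ into a quadratic correction and reduce to a gradient estimate for discrete harmonic functions on the discrete disc $B^\eps_\delta:=\{v\in V^\eps:|v-v_0|<\delta\}$, which is entirely contained in $W$ by the definition of $\delta$.

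First I would set $M:=\|\Delta^\eps\eta\|_{W^{(1)}}$ and note that $\eta\pm Mr$ is sub- or superharmonic on $W^{(1)}$. Using the discrete maximum principle on $B^\eps_\delta$, I split $\eta=h+p$, where $h$ is discrete harmonic on the interior of the discrete disc with the same boundary values as $\eta$ and $p$ vanishes on the discrete boundary. Comparing $\pm p$ with $M(r-\delta^2/(2C_{\alpha\beta\gamma}))$ (which is non-positive on $B^\eps_\delta$ and has the correct Laplacian) gives $\|p\|_W\leq M\delta^2/(2C_{\alpha\beta\gamma})$, and an analogous comparison of the one-sided finite difference of $p$ with a linear-plus-quadratic barrier bounds $\delta|\partial_k^\eps p(v_0)|$ by a constant multiple of $M\delta^2/C_{\alpha\beta\gamma}$.

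Second, I would establish a discrete-harmonic interior gradient estimate
\[\delta|\partial_k^\eps h(v_0)|\leq 7\,\|h\|_W\]
by introducing the auxiliary function
\[\Phi(v)=\pm\bigl(h(v)-h(v_0)\bigr)-A\,\mathrm{Re}(\overline{\omega_k}(v-v_0))+B\bigl(|v-v_0|^2-\delta^2\bigr),\]
with the parameters $A,B$ tuned so that $\Phi$ is discrete subharmonic on $B^\eps_\delta$ and non-positive on its discrete boundary. The discrete max principle then forces $\Phi(v_0+\eps L_k\omega_k)\leq 0$, which combined with $\Phi(v_0)=0$ pins down $\partial_k^\eps h(v_0)$ in terms of $\|h\|_W/\delta$. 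Finally, combining this with Step~1 via $\|h\|_W\leq \|\eta\|_W+\|p\|_W$ and the triangle inequality $|\partial_k^\eps\eta(v_0)|\leq|\partial_k^\eps h(v_0)|+|\partial_k^\eps p(v_0)|$ yields the lemma.

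The main obstacle will be producing the specific numerical factor $7$. This requires a careful choice of $A$ and $B$ in the barrier $\Phi$, as well as handling separately the edge case where $\delta$ is comparable to the mesh size $\eps L_k$ — there the trivial bound $|\eta(v_0+\eps L_k\omega_k)-\eta(v_0)|\leq 2\|\eta\|_W$ already gives $\delta|\partial_k^\eps\eta(v_0)|\leq 2\|\eta\|_W$. Tracking the geometric constants of the lattice (captured by $C_{\alpha\beta\gamma}$ and the $L_k$'s) through the barrier argument, and verifying that the two regimes combine cleanly into the stated form, is the bookkeeping-intensive part; the factor $7$ should emerge from the ratio between $\delta$ and the mesh-induced boundary layer of $B^\eps_\delta$ once the barrier is optimized.
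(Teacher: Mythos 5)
The paper itself does not prove this lemma --- it defers to \cite[Regularity Lemma~7.1]{HeSch98} ``leaving the small but necessary adaptions to the reader'' --- so I am judging your proposal on its own terms. Your Step~1 is sound and correctly identifies where the coefficient $\delta^2/C_{\abc}$ comes from: since $\Delta^\eps$ annihilates affine functions and $\Delta^\eps x^2=\Delta^\eps y^2=C_{\abc}$, the function $r(v)=|v-v_0|^2/(2C_{\abc})$ satisfies $\Delta^\eps r\equiv 1$, and comparison with $\pm\|\Delta^\eps\eta\|\,(r-\delta^2/(2C_{\abc}))$ bounds the zero-boundary-value part $p$ in sup norm. (Two small remarks there: the maximum principle is available because all angles are strictly acute, so all cotangent weights are positive; and at the very end you should bound $\|h\|$ by $\max_{\partial B}|\eta|\le\|\eta\|_W$ directly from the maximum principle rather than by $\|\eta\|_W+\|p\|_W$, since the latter inflates the coefficient of $\|\Delta^\eps\eta\|$ by an extra $7/2$ and the stated inequality has coefficient exactly $\delta^2/C_{\abc}$.)

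The genuine gap is Step~2, the interior gradient estimate $\delta|\partial_k^\eps h(v_0)|\le 7\|h\|$ for the discrete harmonic part: the proposed barrier $\Phi(v)=\pm(h(v)-h(v_0))-A\,\mathrm{Re}(\overline{\omega_k}(v-v_0))+B(|v-v_0|^2-\delta^2)$ cannot work for any choice of $A>0$, $B$. To invoke the maximum principle you must make $\Phi\le 0$ on the entire discrete boundary of $B^\eps_\delta$; but on the half of that boundary where $\mathrm{Re}(\overline{\omega_k}(v-v_0))\approx-\delta$ the linear term contributes $+A\delta$, while the quadratic term is only $O(B\delta\eps)$ there, so $\max_{\partial B}\Phi\ge A\delta-2\|h\|-O(B\delta\eps)$ and non-positivity forces $A\lesssim 2\|h\|/\delta$ \emph{only when} $h(v)-h(v_0)\le -A|\mathrm{Re}(\overline{\omega_k}(v-v_0))|$ on that half-sphere, which fails for generic $h$. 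This is not a bookkeeping issue: no one-shot global comparison with a linear-plus-quadratic function can detect an interior derivative, because both the harmonic function and the affine part are killed by $\Delta^\eps$, so the maximum principle gives no localized information at $v_0$. The interior gradient bound genuinely requires a different mechanism --- in \cite{HeSch98} it is the representation of $\eta(v_0+\eps L_k\omega_k)-\eta(v_0)$ via the discrete Green's function and harmonic measure of the combinatorial ball, where $\sum_w G(v_0,w)\le\delta^2/(2C_{\abc})$ recovers your exit-time bound and the total-variation estimate for harmonic measures based at adjacent vertices produces the factor $7$ (equivalently, a coupling estimate for the associated random walks). Without that ingredient the constant $7$ cannot ``emerge from optimizing the barrier'', because the barrier itself is invalid.
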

Note that this lemma is a version of~\cite[Regularity Lemma~7.1]{HeSch98} and 
we leave the small, but necessary adaptions of the proof to the reader.

Using this Regularity lemma we will deduce convergence of a subsequence of the 
discrete conformal maps of Theorem~\ref{theoConv} as we already know from 
Lemma~\ref{lembound} that the discrete Schwarzians $s_k$ are uniformly bounded 
with a bound independent of $\eps$. Lemma~\ref{lemDeltas} now implies that the 
functions $\Delta^\eps s_k$ are also uniformly bounded. By Lemma~\ref{lemReg} 
also $\partial_j^\eps s_k$ has such a bound (locally uniformly). Thus it 
follows by 
Lemma~\ref{lemconv} that for some sequence of $\eps$ tending to $0$ there 
exists a continuous limit ${\cal S}_k=\lim\limits_{\eps\to 0} s_k$, which are 
Lipschitz functions on the interior of $K$. Note that relation~\eqref{eqsk+} 
implies that
\begin{equation}
 {\cal S}_{k+3}={\cal S}_k.
\end{equation}
Together with~\eqref{eqPhi} this gives
\begin{equation}\label{eqsumSk}
 {\cal S}_{1}+{\cal S}_2+{\cal S}_3=0.
\end{equation}

For simplicity, we assume that the boundary $\partial K$ of the compact set $K$ 
in Theorem~\ref{theoConv} has {\em positive 
reach} $\geq {\cal R}>0$. This means that for every $0<\delta<{\cal R}$ all 
points $p\in\C$ with distance $\text{d}(p,\partial K)$ at most $\delta$ to the 
boundary $\partial K$ have a unique projection onto $\partial K$, that is 
there exists a unique point $x\in\partial K$ such that 
$|x-p|=\text{d}(p,\partial K)$. Any compact set $K$ which is the 
closure of its simply connected interior can be approximated by such compact 
sets with positive reach.

For every $\delta>0$ denote by $V_{K,\delta}^\eps$ the vertices of $V_K^\eps$ 
which have at least Euclidean distance $\delta$ to any vertex in 
$V^\eps\setminus V_K^\eps$.
As $\partial K=\partial \Omega$ is assumed to have positive reach $\geq \cal R$ 
and as $\Omega$ is simply connected, $V_K^\eps$ contains all vertices 
whose distance to the boundary $\partial K$ is at least $\eps$ for all 
$\eps<{\cal R}/4$.

\begin{lemma}\label{lempartialbound}
Let $n\in\N$ and ${\cal R}/4>\delta>0$. There are constants $C=C(n,\delta)$ and 
$\mu_n>0$ such that
\[ \|\partial_{j_n}^\eps \partial_{j_{n-1}}^\eps \dots \partial_{j_1}^\eps 
s_k\|_{(V_{K,\delta}^\eps)^{(n)}} \leq C\]
holds whenever $\eps<\mu_n$ and $k, j_1,\dots,j_n\in\{1,\dots,6\}$. In other 
words, the functions $s_k$ are uniformly bounded in $C^\infty(\Omega)$.
\end{lemma}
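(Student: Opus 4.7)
The plan is to prove the lemma by induction on $n$. The base case $n=0$ is precisely Lemma~\ref{lembound}: $\|s_k\|_{V_K^\eps}$ is uniformly bounded by a constant depending only on the triangle angles, $K$, and~$f$, as soon as $\eps$ is small enough.

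For the inductive step, suppose that for every $\delta>0$ and every multi-index $(j_1,\dots,j_{n-1})$ of length at most $n-1$ the derivative $\partial_{j_{n-1}}^\eps\cdots\partial_{j_1}^\eps s_k$ is bounded uniformly in $\eps$ on $(V_{K,\delta}^\eps)^{(n-1)}$. Fix $\delta>0$ and pick an intermediate $\delta'\in(\delta,{\cal R}/4)$. The crucial observation is that the operators $\tau_m^\eps$, $\partial_j^\eps$ and $\Delta^\eps$ pairwise commute, so setting $\eta:=\partial_{j_{n-1}}^\eps\cdots\partial_{j_1}^\eps s_k$ we have $\Delta^\eps\eta=\partial_{j_{n-1}}^\eps\cdots\partial_{j_1}^\eps(\Delta^\eps s_k)$. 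By Lemma~\ref{lemDeltas}, $\Delta^\eps s_k$ is a polynomial in $\eps$ and in the translates $\tau_m^\eps s_l$. Applying the chain of $\partial^\eps$'s to this polynomial and expanding by means of the discrete product rule~\eqref{prodrule}, one rewrites $\Delta^\eps\eta$ as a polynomial in $\eps$ and in derivatives of order at most $n-1$ of translates of the $s_l$'s. By the induction hypothesis, each such factor is uniformly bounded on a slightly smaller interior set, hence so is $\Delta^\eps\eta$.

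Now apply the Regularity Lemma~\ref{lemReg} to $\eta$ at each point $v_0\in(V_{K,\delta}^\eps)^{(n)}$. The distance from $v_0$ to the complement of the set on which $\eta$ and $\Delta^\eps\eta$ have just been bounded is at least $\delta'-\delta$, so
\[
|\partial_{j_n}^\eps\eta(v_0)|\;\le\;\frac{7}{\delta'-\delta}\,\|\eta\|\;+\;\frac{\delta'-\delta}{C_\abc}\,\|\Delta^\eps\eta\|,
\]
and the right-hand side is uniformly bounded in $\eps$ by the inductive hypothesis together with the polynomial bound on $\Delta^\eps\eta$. This closes the induction. Choosing any compact ${\cal K}\subset\Omega$, selecting $\delta>0$ so that ${\cal K}$ lies in the interior of $V_{K,\delta}^\eps$ for all small $\eps$, and applying the bounds just obtained yields uniform boundedness of the $s_k$ in $C^\infty(\Omega)$ in the sense of Section~\ref{SecPrelim}.

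The main obstacle is purely combinatorial bookkeeping: each application of a directional derivative and each translation appearing through the product rule consumes one layer of interior vertices, and the number of translates to be controlled grows with the order of differentiation. For a fixed target order $n$ and fixed $\delta>0$, this is handled by preallocating a finite chain $\delta=\delta_{n}<\delta_{n-1}<\cdots<\delta_0<{\cal R}/4$ so that at each inductive step there is enough room both to accommodate the translations coming from~\eqref{prodrule} and to apply Lemma~\ref{lemReg}. This is the straightforward analogue of the bookkeeping performed in \cite[Sect.~7]{HeSch98}, and once it is set up the rest of the argument is mechanical.
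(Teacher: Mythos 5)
Your proof follows essentially the same route as the paper's: induction on $n$, commuting the discrete derivatives past $\Delta^\eps$, using Lemma~\ref{lemDeltas} together with the product rule~\eqref{prodrule} to express $\Delta^\eps\eta$ as a polynomial in $\eps$ and lower-order derivatives of translates of the $s_l$, and then invoking the Regularity Lemma~\ref{lemReg}; the paper organizes the bookkeeping with the concrete choices $\delta/4<\delta/2<\delta$ rather than an abstract chain. One correction: your nesting of the $\delta$'s is inverted. Since $V_{K,\delta'}^\eps\subset V_{K,\delta}^\eps$ when $\delta'>\delta$, you must invoke the induction hypothesis with a \emph{smaller} parameter $\delta'<\delta$, so that $\eta$ and $\Delta^\eps\eta$ are controlled on a set containing a $(\delta-\delta')$-neighborhood of each $v_0\in(V_{K,\delta}^\eps)^{(n)}$; as written, with $\delta'\in(\delta,{\cal R}/4)$, a point of $V_{K,\delta}^\eps$ need not even lie in $V_{K,\delta'}^\eps$, and the Regularity Lemma yields nothing there. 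Reversing the chain to $0<\delta_0<\dots<\delta_n=\delta$ (lower-order derivatives bounded on the larger sets) makes your argument correct and identical in substance to the paper's.
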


The proof is very similar to the proof of Lemma~8.1 in~\cite{HeSch98}.
\begin{proof}
 We use induction on $n$. The case $n=0$ has been shown in 
Lemma~\ref{lembound}. 

So let $n>0$ and assume that the lemma holds for $0,\dots,n-1$. 
Consider $g=\partial_{j_{n-1}}^\eps \dots \partial_{j_1}^\eps s_k$. Then 
Lemma~\ref{lemDeltas} implies that $\Delta^\eps g=\Delta^\eps 
\partial_{j_{n-1}}^\eps \dots \partial_{j_1}^\eps s_k =\partial_{j_{n-1}}^\eps 
\dots \partial_{j_1}^\eps\Delta^\eps s_k$ is a linear combination of functions 
of the form $\partial_{j_{n-1}}^\eps \dots \partial_{j_1}^\eps F$ where $F$ is 
one of the functions $\tau^\eps_{m_1} \Phi$, $\tau^\eps_{m_1} \Psi_{m_2}$, 
$\tau^\eps_{m_1} \Theta_{m_2}$, $\Phi$, $\Psi_{m_2}$, $\Theta_{m_2}$ for 
$m_1,m_2=1,\dots,6$. Recall 
from~\eqref{eqPhi}--\eqref{eqTheta} that these functions are polynomials in 
$\eps$ and the $s_l$'s. From the product rule~\eqref{prodrule} it follows by 
induction that $\partial_{j_{n-1}}^\eps \dots \partial_{j_1}^\eps F$ is also 
a polynomial in $\eps$ and expressions of the form $\tau^\eps_{j_m}\dots 
\tau^\eps_{j_{s+1}}\partial^\eps_{j_s}\dots\partial_{j_1}s_l$ where $m\leq 
n-1$. 

Let $v\in V_{K,\delta/2}^\eps$, $4n\eps<\delta$. Then 
$v'=\tau^\eps_{j_m}\dots \tau^\eps_{j_{s+1}}v \in (V_{K,\delta/4}^\eps)^{(s)}$ 
if $m\leq n$.
Now the induction hypothesis with $v'=\tau^\eps_{j_m}\dots \tau^\eps_{j_{s+1}} 
v$, $n'=s$ and $\delta'=\delta/4$ applies and provides a bound for 
$\tau^\eps_{j_m}\dots \tau^\eps_{j_{s+1}}\partial^\eps_{j_s}\dots\partial_{j_1}
s_l$ at $v$. Since $\Delta^\eps g$ is a polynomial in $\eps$ and the 
expressions of the form $\tau^\eps_{j_m}\dots 
\tau^\eps_{j_{s+1}}\partial^\eps_{j_s} \dots\partial_{j_1} s_l$ for $m\leq n$ 
and $s\leq n-1$, we deduce that 
$\|\Delta^\eps g\|_{V_{K,\delta/2}^\eps}\leq C_1$ for some constant 
$C_1=C_1(\delta)$. As $|g|$ is also bounded on 
$V_{K,\delta/2}^\eps$ by the induction hypothesis, the Regularity 
lemma~\ref{lemReg} implies that $\partial_{k_{n}}^\eps g$ is bounded on 
$V_{K,\delta}^\eps$ and therefore the induction step holds. This finishes the 
proof. 
\end{proof}

\begin{corollary}\label{corconvsk}
 $s_k\to {\cal S}_k$ in $C^\infty(\Omega)$ as $\eps\to 0$.
\end{corollary}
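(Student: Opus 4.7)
The plan is to combine the uniform $C^\infty$-bound on the discrete Schwarzians provided by Lemma~\ref{lempartialbound} with the Arzelà--Ascoli-type compactness statement of Lemma~\ref{lemconv}, and then to identify the resulting smooth limits with the a priori $C^0$-limits ${\cal S}_k$ already introduced before the corollary.

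First I would observe that, by Lemma~\ref{lempartialbound}, for every $n\in\N$ the family $\{s_k^\eps\}$ is uniformly bounded in the discrete $C^{n+1}$-norm on every compact subset of $\Omega$, once $\eps$ is small enough. In particular, Lemma~\ref{lemconv} applies at each fixed order $n$: given any sequence $\eps_m\downarrow 0$, one can extract a subsequence along which $s_k^{\eps_m}$ converges in $C^n(\Omega)$. A standard diagonal extraction over $n$ then yields a single subsequence along which $s_k$ and all iterated discrete derivatives $\partial_{j_r}^\eps\cdots\partial_{j_1}^\eps s_k$ of every order converge locally uniformly in $\Omega$ to $C^\infty$-smooth limit functions.

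Next I would identify every such subsequential $C^\infty$-limit with ${\cal S}_k$. Recall that ${\cal S}_k$ itself was defined as the $C^0$-limit along some subsequence $\eps_n\downarrow 0$. Take any sub-subsequence of $(\eps_n)$; Lemma~\ref{lemconv} together with the $C^{n+1}$-bounds from Lemma~\ref{lempartialbound} produces a further sub-subsequence converging in $C^n(\Omega)$, and uniqueness of $C^0$-limits forces the $C^n$-limit to coincide with ${\cal S}_k$. Since every sub-subsequence of $(\eps_n)$ has a further sub-subsequence converging in $C^n$ to ${\cal S}_k$, the original subsequence $(\eps_n)$ itself converges in $C^n(\Omega)$ to ${\cal S}_k$; as $n$ is arbitrary, $s_k\to {\cal S}_k$ in $C^\infty(\Omega)$ along $(\eps_n)$.

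I do not expect any serious new obstacle here, since the heavy lifting is already contained in Lemmas~\ref{lembound}, \ref{lemDeltas} and \ref{lempartialbound}; the corollary really is a formal consequence. If one insists on reading the statement along the full parameter $\eps\to 0$ rather than along the subsequence defining ${\cal S}_k$, the only additional ingredient is uniqueness of the $C^0$-subsequential limit of $s_k$. This would be the main, mildly delicate, point: I would deduce it by observing that any such limit must satisfy the $\eps\to 0$ versions of the closing equations \eqref{eqPhi}--\eqref{eqTheta}, together with \eqref{eqsumSk} and boundary data inherited from $f$ through \eqref{eqboundu}, which pins down the limit uniquely; the diagonal extraction of the first paragraph then upgrades $C^0$-convergence of the whole family to $C^\infty$-convergence.
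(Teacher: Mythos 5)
Your first two paragraphs reproduce the paper's own (half-)proof: Lemma~\ref{lempartialbound} gives uniform $C^{n+1}$-bounds for every $n$, Lemma~\ref{lemconv} plus a diagonal argument gives subsequential $C^\infty$-convergence, and along the particular subsequence defining ${\cal S}_k$ the limit is identified by uniqueness of the $C^0$-limit. That part is fine and is exactly what the paper does.

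The gap is in your last paragraph, which is where the actual content of the corollary lies (the statement is ``as $\eps\to 0$'', not ``along the defining subsequence''). You propose to get uniqueness of the subsequential limit from the $\eps\to0$ versions of the closing equations \eqref{eqPhi}--\eqref{eqTheta} together with \eqref{eqsumSk} and the boundary data \eqref{eqboundu}. This does not work: in the limit, using ${\cal S}_{k+3}={\cal S}_k$, the relation coming from \eqref{eqPsi} collapses to $({\cal S}_k+{\cal S}_{k+1}+{\cal S}_{k+2})(1-Q_k+Q_kQ_{k+1})=0$, i.e.\ it is automatically satisfied by \emph{any} triple with ${\cal S}_1+{\cal S}_2+{\cal S}_3=0$, and likewise for \eqref{eqTheta}. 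So the limiting closing equations carry no information beyond \eqref{eqsumSk} and cannot pin down the limit; nor is there any direct mechanism by which the boundary condition on the scale factors $u^\eps$ constrains a subsequential limit of the Schwarzians. The paper's route to uniqueness is genuinely different and requires the whole of Section~\ref{SecConv}: one builds the contact M\"obius transformations $Z_k^\eps$, shows via \eqref{eqdkZ}--\eqref{eqdk+1Z} that their normalized limits are M\"obius transformations whose entries solve a linear ODE with coefficients given by the ${\cal S}_k$, and uses $f^\eps(v)=Z_1^\eps(v)(0)\to f$ to derive Theorem~\ref{theoConvS}, which expresses each subsequential limit explicitly as ${\cal S}_1=iL_1\mathrm{Re}(\omega_2\omega_3{\mathfrak S}[f])$ etc. Only this explicit, subsequence-independent identification upgrades the compactness argument to convergence of the full family. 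Your proposal is missing this ingredient, and the corollary is not the ``formal consequence'' you describe.
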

\begin{proof}
By Lemma~\ref{lempartialbound} and Lemma~\ref{lemconv} this claim is true for 
some subsequence. The general statement will follow later as corollary of 
Theorem~\ref{theoConvS}, when we prove the convergence in full generality.
\end{proof}

\section{$C^\infty$-convergence of the discrete conformal maps 
$f^\eps$}\label{SecConv}

As primary step to the convergence of the discrete conformal maps $f^\eps$ we 
consider special M\"obius transformations from triangles of $T_K^\eps$ to their 
images under $f^\eps$. In particular, define the contact transformation 
$Z^\eps_k =Z^\eps_k(v)$ for any interior vertex $v$ to be the M\"obius 
transformation which maps the three points $0$, $\eps L_k\omega_k$, $\eps 
L_{k+1}\omega_{k+1}\in V^\eps$ to the three points $f^\eps(v)$, 
$f^\eps(\tau^\eps_k v)$, 
$f^\eps(\tau^\eps_{k+1} v)$ respectively.

Let $R_k^\eps$ denote the translation $R_k^\eps(z)=z+\eps L_k\omega_k$. Then we 
easily note that
\begin{equation*}
 Z_{k+2}^\eps(\tau_k^\eps v)= Z_k^\eps(v)\cdot R_k^\eps.
\end{equation*}
Furthermore, we have
\begin{equation*}
 Z_{k-1}^\eps(v)= Z_k^\eps(v)\cdot M_k^\eps(v),\qquad \text{where }
M_k^\eps(v)= \frac{L_k\omega_k(1+\eps^2s_k(v))z}{\eps s_k(v)z+L_k\omega_k}.
\end{equation*}
These two relations allow us to express $\tau_k^\eps Z_k^\eps(v)$ in terms of 
$Z_k^\eps(v)$ and the transition matrices $R_k^\eps$ and $M_k^\eps(v)$:
\begin{equation}\label{eqtauZ}
 \tau_k^\eps Z_k=\tau_k^\eps(Z_{k+2}^\eps\cdot M_{k+2}^\eps \cdot 
M_{k+1}^\eps) =Z_k^\eps\cdot R_k^\eps\cdot \tau_k^\eps M_{k+2}^\eps \cdot 
\tau_k^\eps M_{k+1}^\eps.
\end{equation}
The matrix representations of $R_k^\eps$ and $M_k^\eps$ are
\begin{equation*}
 R_k^\eps=\begin{pmatrix} 1 & \eps L_k\omega_k \\[0.5ex] 0 & 1 
\end{pmatrix}\qquad \text{and}\qquad
M_k^\eps= \begin{pmatrix} 1+\eps^2s_k &0 \\[0.5ex] \frac{\eps s_k}{L_k\omega_k} 
&1\end{pmatrix}.
\end{equation*}
Note that both $R_k^\eps$ and $M_k^\eps$ are polynomial in $\eps$ and $s_k$.
Now direct computation gives
\begin{equation*}
 R_k^\eps\cdot \tau_k^\eps M_{k+2}^\eps \cdot \tau_k^\eps M_{k+1}^\eps=
I+\eps \begin{pmatrix} 0 & L_k\omega_k \\ \frac{\tau_k^\eps 
s_{k+1}}{L_{k+1}\omega_{k+1}} +\frac{\tau_k^\eps s_{k+2}}{L_{k+2}\omega_{k+2}} 
&0 \end{pmatrix} + \eps^2 {\cal O}(1),
\end{equation*}
where $I$ is the identity matrix and ${\cal O}(1)$ denotes some matrix which is 
polynomial in $\eps,\tau_k^\eps s_{k+1}, \tau_k^\eps s_{k+2}$. Combined 
with~\eqref{eqtauZ} the discrete derivative $\partial_k^\eps Z_k^\eps$ may be 
express as follows:
\begin{equation}\label{eqdkZ}
 \partial_k^\eps Z_k^\eps= Z_k^\eps\cdot \begin{pmatrix} 0 & \omega_k \\ 
\frac{\tau_k^\eps s_{k+1}}{L_kL_{k+1}\omega_{k+1}} +\frac{\tau_k^\eps 
s_{k+2}}{L_kL_{k+2}\omega_{k+2}} &0 \end{pmatrix}+\eps Z_k\cdot {\cal O}(1).
\end{equation}
Similar computations give
\begin{equation}\label{eqdk+1Z}
 \partial_{k+1}^\eps Z_k^\eps= Z_k^\eps\cdot \begin{pmatrix} 0 & \omega_{k+1} 
\\ 
-\frac{\tau_{k+1}^\eps s_{k}}{L_{k+1}L_k\omega_{k}} -\frac{\tau_{k+1}^\eps 
s_{k-1}}{L_{k+1}L_{k-1}\omega_{k-1}} &0 \end{pmatrix} +\eps Z_k\cdot {\cal 
O}(1).
\end{equation}
(Of course, the matrix in ${\cal O}(1)$ differs in general from the one 
in~\eqref{eqdkZ}.)
As $\partial_{k+3}^\eps = -\partial_k^\eps \tau_{k+3}^\eps$ and 
$\partial_{k+2}^\eps= \frac{L_{k+1}}{L_{k+2}} \partial_{k+1}^\eps 
\tau_{k+3}^\eps -\frac{L_{k+3}}{L_{k+2}} \partial_k^\eps \tau_{k+3}^\eps$ the 
expressions for all derivatives $\partial_j^\eps Z_k^\eps$ can be obtained 
from~\eqref{eqdkZ} and~\eqref{eqdk+1Z}.

Recall that $0$ is a vertex of $V_K^\eps$
and define $\hat{Z}_k^\eps(v)= Z_k^\eps(0)^{-1} 
Z_k^\eps(v)$. Then we deduce from~\eqref{eqdkZ} and~\eqref{eqdk+1Z} and the 
similar expressions for the other $\tau_j^\eps Z_k$ that 
$\tau_j^\eps\hat{Z}_k^\eps =\hat{Z}_k^\eps(I+\eps {\cal O}(1))$. As 
$\hat{Z}_k^\eps(0)=I$ and as $T_K^\eps$ is part of a 
(scaled) lattice contained in the compact set $K$ we deduce that 
$\hat{Z}_k^\eps$ is bounded, independently of $\eps$. From the corresponding 
relations for $Z_k^\eps$ we deduce that for $j=1,\dots,6$,
\begin{equation}\label{eqdhatZ}
 \partial_j^\eps\hat{Z}_k^\eps=\hat{Z}_k^\eps\cdot {\cal O}(1).
\end{equation}
As the elements of the matrix in the ${\cal O}(1)$-term are polynomials in 
$\eps$ and $\tau_l^\eps s_m$ for $m,l=1,\dots, 6$, 
Lemma~\ref{lempartialbound} implies that the ${\cal O}(1)$-term is bounded in 
$C^\infty(\Omega)$. Therefore, repeated differentiation of~\eqref{eqdhatZ} 
shows 
that $\hat{Z}_k^\eps$ is bounded in $C^\infty(\Omega)$. Now we can again deduce 
from 
Lemma~\ref{lemconv} that along some subsequence $\eps\to 0$ the limit 
$\hat{\cal Z}_k=\lim\limits_{\eps\to 0} \hat{Z}_k^\eps$ exists and that the 
convergence is in $C^\infty(\Omega)$. Moreover, equations~\eqref{eqdkZ} 
and~\eqref{eqdk+1Z} imply
\begin{align}
 \partial_k \hat{\cal Z}_k &= \hat{\cal Z}_k\cdot \begin{pmatrix} 0 & \omega_k 
\\ 
\frac{ {\cal S}_{k+1}}{L_kL_{k+1}\omega_{k+1}} +\frac{ 
{\cal S}_{k+2}}{L_kL_{k+2}\omega_{k+2}} &0\end{pmatrix}, \label{eqdkcalZa} \\
\partial_{k+1} \hat{\cal Z}_k &= \hat{\cal Z}_k\cdot \begin{pmatrix} 0 
& \omega_{k+1} \\ -\frac{ {\cal S}_{k}}{L_{k+1}L_k\omega_{k}} 
-\frac{ {\cal S}_{k-1}}{L_{k+1}L_{k-1}\omega_{k-1}} &0\end{pmatrix}. 
\label{eqdkcalZb}
\end{align}
These relations show that $\partial_{k+1} \hat{\cal Z}_k= 
\frac{\omega_{k+1}}{\omega_k}\partial_k \hat{\cal Z}_k$, which means that 
$\hat{\cal Z}_k(z)$ is a matrix-valued analytic function. 
As the determinant of ${\hat Z}_k^\eps$ is constant and $\det {\hat Z}_k^\eps 
(0)=1$ we deduce that $\hat{\cal Z}_k(z)$ is a M\"obius transformation.

Our next step is to show that $Z_k^\eps$ also converges for some subsequence of 
$\eps\to 0$. To this end we will show that $Z_k^\eps(0)$ 
is bounded independently of $\eps$ and converges.

Denote by $Z_k^\eps(v)(w)$ the image of $w$ by the M\"obius transformation 
$Z_k^\eps(v)$.
First recall that by Theorem~\ref{theoConvalt} we know that 
$Z_k^\eps(v_z)(0)=f^\eps(v_z)\to f(z)$ for $\eps\to 0$, where $v_z\in 
V_K^\eps$ is a vertex nearest to $z$. Further note that $\hat{\cal 
Z}_k(0)=I$ and $\frac{\partial}{\partial z}|_{z=0} \hat{\cal Z}_1(z)(0) 
=\omega_1\not=0$. Thus, for $\xi>0$ small enough the three points 
$w_0:= \hat{\cal Z}_1(0)(0)$, $w_+:= \hat{\cal Z}_1(\xi)(0)$ and  $w_-:= 
\hat{\cal Z}_1(-\xi)(0)$ are pairwise different. Let $v_+,v_-$ be two 
vertices which are nearest to $\xi$ and -$\xi$ respectively. Then 
$Z_1^\eps(0)$ maps $\hat{Z}_1^\eps(0)(0)$, 
$\hat{Z}_1^\eps(v_+)(0)$, $\hat{Z}_1^\eps(v_-)(0)$ (which are points close to 
$w_0$, $w_+$, $w_-$) to points close to $f(0)$, $f(\xi)$, $f(-\xi)$ 
respectively.
This shows that $\lim\limits_{\eps\to 0} 
Z_1^\eps(0)$ exists and is the M\"obius transformation 
which maps $w_0$, $w_+$, $w_-$ to $f(0)$, $f(\xi)$, $f(-\xi)$. By similar 
arguments, we see that the same is also true for the other transformations 
$Z_k^\eps(0)$. Thus, we obtain $C^\infty$-convergence 
${\cal Z}_k=\lim\limits_{\eps\to 0} {Z}_k^\eps$ along some subsequence $\eps\to 
0$.

\begin{theorem}\label{theoConvS}
\begin{equation}\label{eqS}
 {\mathfrak S}[f]= \frac{2 \left((L_2\omega_2 
+L_3\omega_3){\cal S}_1 + (L_1\omega_1-L_3\omega_3){\cal S}_2 
+(-L_1\omega_1-L_2\omega_2){\cal 
S}_3\right)}{3L_1L_2L_3\omega_1\omega_2\omega_3}
\end{equation}
\end{theorem}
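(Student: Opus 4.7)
The plan is to identify $f$ as the evaluation at $0$ of the limiting contact transformations $\mathcal{Z}_k$ and to extract $\mathfrak{S}[f]$ directly from the matrix ODE satisfied by the normalized limits $\hat{\mathcal{Z}}_k$. By construction, $Z_k^\eps(v)(0)=f^\eps(v)$ at every interior vertex $v$, so combining the uniform convergence $f^\eps\to f$ from Theorem~\ref{theoConvalt} with the $C^\infty$-convergence $Z_k^\eps\to\mathcal{Z}_k$ (along a subsequence) yields $f(z)=\mathcal{Z}_k(z)(0)$ for each $k\in\{1,2,3\}$. Since the Schwarzian is invariant under post-composition with the fixed M\"obius transformation $\mathcal{Z}_k(0)$ and $\mathcal{Z}_k=\mathcal{Z}_k(0)\cdot\hat{\mathcal{Z}}_k$, the problem reduces to computing $\mathfrak{S}[\hat f_k]$ where $\hat f_k(z):=\hat{\mathcal{Z}}_k(z)(0)$.

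Next I would unpack the matrix ODE. The compatibility $\partial_{k+1}\hat{\mathcal{Z}}_k=(\omega_{k+1}/\omega_k)\partial_k\hat{\mathcal{Z}}_k$ noted in the text shows that $\hat{\mathcal{Z}}_k$ is holomorphic in $z$, and dividing~\eqref{eqdkcalZa} by $\omega_k$ yields
\[
\partial_z\hat{\mathcal{Z}}_k=\hat{\mathcal{Z}}_k\cdot N_k,\qquad N_k=\begin{pmatrix}0 & 1\\ b_k & 0\end{pmatrix},\qquad b_k=\frac{{\cal S}_{k+1}}{L_kL_{k+1}\omega_k\omega_{k+1}}+\frac{{\cal S}_{k+2}}{L_kL_{k+2}\omega_k\omega_{k+2}}.
\]
Writing $\hat{\mathcal{Z}}_k=\begin{pmatrix}A & B\\ C & D\end{pmatrix}$, the ODE gives $B'=A$, $D'=C$, $A'=b_kB$, $C'=b_kD$, and $\det\hat{\mathcal{Z}}_k$ is constant since $\operatorname{tr} N_k=0$. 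A direct computation then yields $\hat f_k'=(AD-BC)/D^2$ with constant numerator, $\hat f_k''/\hat f_k'=-2C/D$, and hence
\[
\mathfrak{S}[\hat f_k]=\bigl(-\tfrac{2C}{D}\bigr)'-\tfrac{1}{2}\bigl(-\tfrac{2C}{D}\bigr)^2=-2b_k,
\]
so that $\mathfrak{S}[f]=-2b_k$ for each $k\in\{1,2,3\}$.

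Finally I would symmetrize. For $k=2,3$ the indices $k+1,k+2\in\{4,5\}$ are rewritten in $\{1,2\}$ using ${\cal S}_{k+3}={\cal S}_k$, $\omega_{k+3}=-\omega_k$, $L_{k+3}=L_k$, and averaging the three expressions $-2b_1,-2b_2,-2b_3$ over the common denominator $3L_1L_2L_3\omega_1\omega_2\omega_3$ produces exactly the right-hand side of~\eqref{eqS}. The one delicate point is the sanity check that the three expressions $-2b_k$ do in fact agree; this follows from ${\cal S}_1+{\cal S}_2+{\cal S}_3=0$ (cf.~\eqref{eqsumSk}) together with the triangle closing identity $L_1\omega_1-L_2\omega_2+L_3\omega_3=0$. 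Beyond careful index bookkeeping modulo~$6$ the argument is mechanical; the conceptual content is entirely the $\mathfrak{sl}_2$-shape of $N_k$, which makes the Schwarzian collapse to $-2b_k$.
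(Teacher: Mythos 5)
Your proposal is correct and follows essentially the same route as the paper: the $\mathfrak{sl}_2$-valued ODE $\partial_z\hat{\mathcal{Z}}_k=\hat{\mathcal{Z}}_k N_k$ makes the Schwarzian of the ratio of the second-column entries collapse to $-2b_k$, which is exactly the paper's computation~\eqref{eqS23} for $k=1$. The only (cosmetic) difference is that you evaluate all three $b_k$ and symmetrize by averaging, while the paper computes $k=1$ alone and then rewrites it into the symmetric form~\eqref{eqS} using ${\cal S}_1+{\cal S}_2+{\cal S}_3=0$ and $L_1\omega_1-L_2\omega_2+L_3\omega_3=0$.
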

This theorem implies that the convergence of $s_k$ holds along every 
subsequence, which proves Corollary~\ref{corconvsk}.

\begin{proof}
 We have already shown that
\begin{equation*}
 {\cal Z}_1(z)= \begin{pmatrix} a(z) & b(z) \\ c(z) & d(z) \end{pmatrix}
\end{equation*}
is a M\"obius transformation and $f(z)={\cal Z}_1(z)(0)=\frac{b(z)}{d(z)}$. 
From~\eqref{eqdkZ} we deduce the relation
\begin{equation*}
 \partial_1{\cal Z}_1= {\cal Z}_1\begin{pmatrix} 0 & \omega_1 \\  \frac{ {\cal 
S}_{2}}{L_1L_{2}\omega_{2}} +\frac{{\cal S}_{3}}{L_1L_{3}\omega_{3}} & 0 
\end{pmatrix},
\end{equation*}
which implies that $b(z)$ and $d(z)$ satisfy the same differential equation 
\[w''=\frac{\omega_1}{L_1L_2L_3\omega_2\omega_3}\left( L_3\omega_3{\cal S}_2 
+L_2\omega_2){\cal S}_3\right)w.\] 
Therefore $b'd-bd'=\hat{C}=$constant and $(\frac{b}{d})'=\frac{\hat{C}}{d^2}$.
This implies (with $\omega_1=1$) for the Schwarzian of $f$ that
\begin{align}
 {\mathfrak S}[f] &={\mathfrak S}[\frac{b}{d}]=-2\frac{d''}{d} =  
-\frac{2\omega_1}{L_1L_2L_3\omega_2\omega_3}\left( L_3\omega_3{\cal S}_2 
+L_2\omega_2{\cal S}_3\right).\label{eqS23}
\end{align}
Now using $\omega_1=1$, $L_1\omega_1-L_2\omega_2+L_3\omega_3=0$ and the 
identity~\eqref{eqsumSk}, it is easy to check that that~\eqref{eqS} holds.
\end{proof}

As $f^\eps$ is discrete conformal, we deduce from~\eqref{eqskIm} that ${\cal 
S}_k$ ($k=1,2,3$) are purely imaginary. Therefore, it follows 
from~\eqref{eqS23} that
\begin{equation}
 {\cal S}_1=iL_1\text{Re}(\omega_2\omega_3{\mathfrak S}[f]),\quad
{\cal S}_2=-iL_2\text{Re}(\omega_1\omega_3{\mathfrak S}[f]),\quad
{\cal S}_3=iL_3\text{Re}(\omega_1\omega_2{\mathfrak S}[f]).
\end{equation}

Finally we deduce the $C^\infty$-convergence of the discrete conformal 
maps $f^\eps$.
\begin{proof}[Proof of Theorem~\ref{theoConv}]
 Recall that $f^\eps(v)=Z_1^\eps(v)(0)$. We have already shown that 
the M\"obius transformations
\begin{equation*}
 {Z}_1^\eps(v)= \begin{pmatrix} a^\eps(v) & b^\eps(v) \\ c^\eps(v) & d^\eps(v) 
\end{pmatrix}
\end{equation*}
converge for $\eps\to 0$ in $C^\infty$ to the M\"obius transformation ${\cal 
Z}_1(z)$. Then $d^\eps$ converges to $d$ and  $b^\eps$ converges to $b$ and 
$d\not=0$ as $b(z)/d(z)={\cal Z}_1(z)(0)=f(z)$ and the determinant of ${\cal 
Z}_1(z)$ is nonzero. Consequently, Lemma~\ref{lemconv} implies that 
$f^\eps=b^\eps/d^\eps$ converges in $C^\infty$ to $b/d=f$.
\end{proof}

\section{Remarks on generalizations}\label{SecGen}
Discrete analogues of conformal maps already have a long history. The methods 
for a proof of $C^\infty$-convergence considered in this article for discrete 
conformal based on conformally equivalent triangular meshes also works very 
similarly for circle patterns on hexagonal lattices. Here we take the 
circumcircles of all triangles in $T_K^\eps$ and demand that the intersection 
angles of these circumcircles are preserved. In other words, such discrete maps 
preserve the arguments of the cross-ratios $Q_k$. Note that in this case, 
equations~\eqref{eqq1} and~\eqref{eqq2} are still valid. Only~\eqref{eqq3} has 
to be replaced by a similar equation where certain unitary numbers (quotients 
of $\omega_k$'s) appear instead of quotients of absolute values of $Q_k$'s.
Therefore, given convergence 
(in $C^0$ or $C^1$ say) of such circle patterns and some bounds on the discrete 
Schwarzians, the rest of the proof could be applied with only minor adaptions to 
show $C^\infty$-convergence of regular hexagonal circle patterns. Similar ideas 
have already been worked out in~\cite{LD07} for orthogonal circle patterns with 
square grid combinatorics using other M\"obius invariants and in~\cite{Bue08} 
for the 
more general case of isoradial circle patterns studying the radius function. 
Note that similarly to the method of the proof of~\cite{Bue08}, 
$C^\infty$-approximation of the discrete conformal maps considered in this 
article can also be shown based on estimates for the discrete 
Laplacian of the scale factors $u^\eps$.

Generalizing the notion of a conformal map to include both,  
Definition~\ref{defdcm} and hexagonal circle patterns, we may consider maps 
such that a fixed linear 
combination of the real and the imaginary part of the cross-ratios $Q_k$ 
remains constant: 
\[a\log |q_k| +b\arg q_k= a\log |Q_k| +b\arg Q_k\] 
for some fixed constants $a,b\in\R$ and $\arg q_k, \arg Q_k\in (0,2\pi)$.
Again, equations~\eqref{eqq1} and~\eqref{eqq2} are still 
valid and only~\eqref{eqq3} has to be adapted. This generalized notion of 
discrete conformality has not been studied yet. But if there was a suitable 
convergence result (for example uniform convergence of $f^\eps$ and bounds on 
the discrete Schwarzians) the methods of our proof of $C^\infty$-convergence 
could be applied analogously.

\section*{Acknowledgement}
This research was supported by the DFG Collaborative Research Center 
TRR~109, ``Discretization in Geometry and Dynamics''. The author is grateful to 
A.I.~Bobenko for discussions on the topic of discrete conformal maps on 
hexagonal lattices.


\bibliographystyle{amsalpha} 
\bibliography{paperconv}

\end{document}